\newtheorem{thmA}{Theorem}
\newtheorem{corA}[thmA]{Corollary}
\numberwithin{equation}{section} 
\newtheorem{theorem}{Theorem}[section]
\newtheorem{prop}[theorem]{Proposition}
\newtheorem{proposition}[theorem]{Proposition}
\newtheorem{lemma}[theorem]{Lemma}
\newtheorem{corollary}[theorem] {Corollary}
\newtheorem*{theorem*}{1-2-3 Theorem} 
\theoremstyle{remark}
\newtheorem{remark}[theorem]{Remark}
\theoremstyle{definition}
\def\Z{\mathbb Z}
\def\N{\mathbb N}
\def\Q{\mathbb Q}
\def\G{\Gamma}
\def\e{\varepsilon}
\def\-{\overline}
\def\wh{\widehat}
\def\G{\Gamma} 
\def\G{\Gamma}
\def\<{\langle}
\def\>{\rangle}
\subjclass[2010]{20J05, 20F10, 20E18 (20F65)} 
\begin{document}

\title[Homology of groups and echoes of Baumslag]{The homology of groups, 
profinite completions, and echoes of Gilbert Baumslag}

% author  information
\author[Martin R. Bridson]{Martin R.~Bridson}
\address{Martin R.~Bridson\\
Mathematical Institute\\
Andrew Wiles Building\\
ROQ, Woodstock Road\\
Oxford\\
European Union}
\email{bridson@maths.ox.ac.uk}

%\date{25 July 2019, 16 Aug 2019, typos Dec 2019}

\keywords{Homology of groups, undecidability, Grothendieck pairs}

\begin{abstract}  We present novel constructions concerning the homology of finitely generated groups. 
Each construction draws on ideas of Gilbert Baumslag.
 There is a finitely presented acyclic group $U$ such that $U$ has no proper subgroups of finite index and
every finitely presented group can be embedded in $U$.
There is no algorithm that can determine whether or not a finitely
presentable subgroup of a residually finite, biautomatic group is perfect. 
For every recursively presented abelian group $A$ there 
exists a pair of groups $i:P_A\hookrightarrow G_A$ such that $i$ induces an isomorphism of profinite completions, where $G_A$  
is a torsion-free biautomatic group that is residually finite and  superperfect,
while $P_A$ is a finitely generated group with $H_2(P_A,\Z)\cong A$. 
\end{abstract}
\maketitle

\centerline{\em For Gilbert Baumslag, in memoriam}

\section{Introduction}                 

Gilbert Baumslag took a great interest in the homology of groups. Famously,  with 
Eldon Dyer and Chuck Miller \cite{BDM} he proved  
that an arbitrary sequence of countable
abelian groups $(A_n)$, with $A_1$ and $A_2$ finitely generated,
will arise as the homology sequence $H_n(G,\Z)$ of some
finitely presented group $G$, provided that the $A_n$ can be 
described in an untangled recursive manner. This striking result built on Gilbert's earlier work with
Dyer and Alex Heller \cite{BDH}. 
A variation on 
arguments from  \cite{BDM} and \cite{BDH} yields the following result, which will be useful
in our study of profinite completions of discrete groups.
Recall that a group $G$ is termed {\em{acyclic}} if $H_n(G,\Z)=0$ for
all $n\ge 1$. 

\begin{thmA}\label{t:thm1}
 There is a finitely presented acyclic group $U$ such that
\begin{enumerate}
\item $U$ has no proper subgroups of finite index;
\item every finitely presented group can be embedded in $U$.
\end{enumerate}
\end{thmA}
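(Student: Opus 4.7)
The plan is to combine Higman's embedding theorem with a finitely presented version of the Baumslag--Dyer--Heller mitotic construction, refined in the spirit of Baumslag--Dyer--Miller.

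First, Higman's embedding theorem applied to a recursive enumeration of presentations of all finitely presented groups produces a finitely presented group $Q$ in which every finitely presented group embeds. It therefore suffices to embed $Q$ into a finitely presented acyclic group $U$ that has no proper subgroup of finite index.

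To construct $U$, I would perform a finite sequence of HNN extensions and amalgamated products starting from $Q$, arranged so that the stable letters and amalgamated subgroups realise mitotic-type identities of the form $shs^{-1} = (dhd^{-1})\cdot h$. Such an identity has two consequences: via Mayer--Vietoris for HNN extensions and the Lyndon--Hochschild--Serre spectral sequence, it kills the positive-degree homology of the previous stage; and it forces every element of that stage into the commutator subgroup of the next. Iterating through the finitely many stages, the image of $Q$ in any finite quotient of $U$ must lie in every term of the derived series, hence be trivial; an additional layer handling the stable letters themselves then shows that every finite quotient of $U$ is trivial. The embedding of any finitely presented group into $U$ is then immediate from the factorisation through $Q$.

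The central obstacle is realising all of this inside a \emph{single} finite presentation. The BDH mitotic tower is naturally infinitely generated, and one has to use a BDM-style ascending HNN extension to encode the tower while retaining the acyclicity of its direct limit. Simultaneously, at each stage the amalgamated subgroups must themselves be perfect---ideally acyclic and drawn from a mitotic sub-tower---so that no element of $U$ can survive in any finite quotient. This coupling of the mitotic/homological requirement with the profinite-triviality requirement, under the constraint of finite presentability, is the delicate point; once the amalgamation data are chosen to satisfy both, acyclicity, the embedding property, and the absence of finite-index subgroups drop out together.
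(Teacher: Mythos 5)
Your ingredients are the right ones --- Higman's embedding theorem, the Baumslag--Dyer--Heller acyclicity machinery, and a BDM-style ascending HNN to recapture finite presentability --- but the recipe is not executable as written, and you have in fact mislocated the heart of the argument. You propose to build mitotic-type identities directly into a finite presentation and then note that ``realising all of this inside a single finite presentation'' under the ``coupling of the mitotic/homological requirement with the profinite-triviality requirement'' is the delicate point; but you then simply assert that once the amalgamation data are ``chosen to satisfy both,'' everything ``drops out together.'' That is precisely the step that needs a proof. The paper resolves it cleanly and abstractly: rather than trying to build a mitosis, it invokes two \emph{black-box} embedding theorems separately --- every f.p.\ group embeds in an f.p.\ group with no nontrivial finite quotients, and every f.p.\ group embeds in an f.p.\ acyclic group --- and combines them via Lemma~\ref{l:simult}. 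There one fixes a Higman universal group $U_0$, nests $U_0 < V < W < U_1 \cong U_0$ with $V$ having no finite quotients and $W$ acyclic, forms the ascending HNN extension $U^\dagger = \langle U_1, t \mid t^{-1}ut = \phi(u)\rangle$ along an isomorphism $\phi: U_1 \to U_0$, and observes that the kernel $K$ of the retraction $U^\dagger \to \langle t\rangle$ is simultaneously a direct limit of copies of $V$ and a direct limit of copies of $W$, hence has both properties. Nothing in your proposal plays the role of this interleaving; without it there is no mechanism guaranteeing that the limit group is both acyclic and profinitely trivial.

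You have also skipped the final correction step. The group $U^\dagger = K \rtimes \Z$ produced by the HNN extension is not yet the answer: it has $H_1(U^\dagger,\Z)\cong\Z$ and visibly maps onto finite cyclic groups, so it is neither acyclic nor without finite quotients. The paper fixes this by amalgamating with a second finitely presented acyclic group $B$ that has no nontrivial finite quotients, identifying the $\Z$ factor of $U^\dagger$ with an infinite-cyclic subgroup of $B$: $U = U^\dagger \ast_{\Z} B$. A Mayer--Vietoris computation (using Lemma~\ref{l:acylic-fibre} to see that $H_*(U^\dagger)\cong H_*(\Z)$) then shows $U$ is acyclic, and the fact that every finite quotient of $U$ kills $B$, hence kills $t$, hence kills $K$, shows $U$ has no proper finite-index subgroups. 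Your proposal gestures at ``an additional layer handling the stable letters,'' but gives no construction, and the stated mechanism (pushing $Q$ through the derived series) does not by itself deal with the stable letter $t$ or yield finite presentability of the end product.
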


A recursive presentation $(X\,|\, R)_{\rm{Ab}}$ of an abelian group is said to be
{\em untangled} if the set $R$ is a basis for the subgroup $\<R\>$
of the free abelian group generated by $X$.  
The following corollary can be deduced  from
Theorem \ref{t:thm1} using the Baumslag-Dyer-Miller construction; see Section \ref{s:homology}.

\begin{corA}\label{c:cor} Let $\mathcal{A}=(A_n)_n$ be a sequence of abelian
groups, the first of which is finitely generated. If the $A_n$ are 
given by a recursive sequence of recursive presentations, each of which
is untangled, then there is a finitely presented group $Q_{\mathcal{A}}$ with no proper subgroups of finite index and $H_n(Q_{\mathcal{A}}, \Z)\cong A_{n-1}$ for all $n\ge 2$.
\end{corA}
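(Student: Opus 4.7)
The plan is to build $Q_{\mathcal A}$ as the doubled amalgam $U *_G U$, where $U$ is the finitely presented acyclic group from Theorem~\ref{t:thm1} and $G$ is a finitely generated ``homology carrier'' realising the sequence $(A_n)_{n\ge 1}$ as its integral homology. The doubling shifts the homology up by one and simultaneously kills every finite quotient, which is exactly what the corollary demands.

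The first and most delicate step is to produce a finitely generated (not necessarily finitely presented) group $G$ with $H_n(G)\cong A_n$ for every $n\ge 1$. A Baumslag--Dyer--Heller / Baumslag--Dyer--Miller style construction---essentially a group-theoretic Kan--Thurston theorem---realises any untangled recursive sequence of countable abelian groups with $A_1$ finitely generated as the integral homology of a finitely generated, recursively presented group. The additional hypothesis ``$A_2$ finitely generated'' in the form of BDM quoted in the introduction is what is needed to upgrade the output all the way to a \emph{finitely presented} group, because Hopf's formula forces $H_2$ of a finitely presented group to be finitely generated; that hypothesis is not needed if one is content with a merely finitely generated $G$. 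Because $G$ is recursively presented, Higman's embedding theorem places it inside a finitely presented group, and Theorem~\ref{t:thm1}(2) then embeds that---hence $G$ itself---inside $U$. Using this embedding I set $Q_{\mathcal A} := U *_G U$, which is finitely presented because $U$ is finitely presented and $G$ is finitely generated.

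Verification of the conclusions is then immediate. The Mayer--Vietoris sequence of the amalgamation, combined with the acyclicity of $U$, gives $H_n(Q_{\mathcal A})\cong H_{n-1}(G)\cong A_{n-1}$ for every $n\ge 2$; in degree one, the map $H_0(G)\to H_0(U)\oplus H_0(U)$ sending $1\mapsto(1,-1)$ is injective, so $H_1(Q_{\mathcal A})=0$. Any homomorphism $Q_{\mathcal A}\to F$ to a finite group must be trivial on each $U$-vertex by Theorem~\ref{t:thm1}(1), and therefore on all of $Q_{\mathcal A}$, which is generated by those two copies of $U$; hence $Q_{\mathcal A}$ has no proper subgroups of finite index.

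The main obstacle is the homology-realisation step: extracting a finitely generated group with an arbitrary (untangled recursive, only $A_1$ finitely generated) homology sequence from the machinery of~\cite{BDH} and~\cite{BDM}. One has to revisit their construction of the finitely presented group that realises a sequence with $A_1$ and $A_2$ finitely generated, and peel off the intermediate, merely finitely generated group that sits beneath it, freeing oneself of the $A_2$-constraint. Once that is achieved, the amalgamation with $U$ accomplishes the two remaining tasks---promotion to finite presentability and suppression of finite quotients---at a single stroke.
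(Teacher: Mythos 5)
Your construction coincides exactly with the paper's: amalgamate two copies of $U$ along a finitely generated homology-realizer $G$, compute $H_*(Q_{\mathcal A})$ with Mayer--Vietoris, and kill finite quotients via the two $U$-factors. The ``main obstacle'' you flag is not actually an obstacle: Theorem~E of \cite{BDM} (see also \cite{cfm}), which the paper cites directly, already produces a finitely generated, recursively presented group realising the given homology sequence under the hypothesis that only $A_1$ is finitely generated, so there is no need to peel anything out of the finitely presented construction.
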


In \cite{BR}, Gilbert and
Jim Roseblade used homological arguments to prove that every finitely presented 
subdirect product of two finitely generated free groups is either 
free or of finite index. 
This insight was the germ for a large and immensely rich
body of work concerning residually free groups and subdirect products of hyperbolic
groups, with the homology of groups playing a central role. The
pursuit of these ideas
has occupied a substantial part of my professional life \cite{BM, BHMS1, BHMS2, BHMS3}
and also commanded much of Gilbert's attention in the latter part of his 
career \cite{bbms1, bbms2, AlgGeom, bbhm}.  A cornerstone of this programme
is the 1-2-3 Theorem, which Gilbert and I proved in our second paper with 
Chuck Miller and Hamish Short \cite{bbms2}. 

The proof of the following theorem provides a typical example of the utility
of the 1-2-3 Theorem. It extends the theme of \cite{bbms1, bbms2}, which demonstrated
the wildness that is to be found among the finitely presented subgroups of automatic
groups. It also reinforces the point made in \cite{BW} about the necessity of including
the full input data in the effective version of the 1-2-3 Theorem \cite{BHMS3}.
The proof that the ambient biautomatic group is residually finite relies on deep work
of  Wise \cite{wiseSC, wiseBIG} and Agol \cite{agol} as well as Serre's insights into the connection between  residual finiteness
and cohomology with finite coefficient modules \cite[Section I.2.6]{serre}.

\begin{thmA}\label{t:perfect} There is no algorithm that can determine whether or not a finitely presentable subgroup of a residually finite, biautomatic group is perfect.
\end{thmA}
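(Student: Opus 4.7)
The plan is to reduce the undecidable problem of deciding whether a finitely presented group is superperfect to the perfectness problem for a family of fibre-product subgroups of $\G\times\G$, where $\G$ is a residually finite biautomatic group obtained from a cubical Rips construction.

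First, applying Corollary~\ref{c:cor} to a recursive untangled abelian group $A$ (with $A_1 = A$ and $A_n = 0$ for $n \ge 2$) produces a finitely presented group $Q_A$ with no proper subgroups of finite index, satisfying $H_1(Q_A,\Z) = 0$ (forced by the absence of finite quotients, since every nontrivial finitely generated abelian group has a nontrivial finite quotient) and $H_2(Q_A,\Z) \cong A$. Since triviality of a recursive untangled abelian group is a classical undecidable problem, the superperfectness of $Q_A$ is undecidable. With further care one arranges that $Q_A$ is of type $F_3$, as required by the 1-2-3 Theorem: either by using a type-$F_\infty$ refinement of the Baumslag--Dyer--Miller construction underlying Corollary~\ref{c:cor}, or by replacing $Q_A$ with $Q_A \times U$, where $U$ is the finitely presented acyclic group of Theorem~\ref{t:thm1}.

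Next I would feed $Q_A$ into a cubical Rips construction, in the form refined by Wise to give virtually special hyperbolic output, obtaining a short exact sequence $1 \to N_A \to \G_A \to Q_A \to 1$ in which $\G_A$ is hyperbolic and virtually compact special, and $N_A$ is finitely generated. By Wise \cite{wiseSC,wiseBIG} and Agol \cite{agol}, $\G_A$ is residually finite and linear; standard CAT(0) cube-complex techniques give biautomaticity. Hence $G_A := \G_A \times \G_A$ is residually finite and biautomatic, and is algorithmically specified by $A$. I would then form the fibre product $P_A = \{(x,y) \in G_A : x^{-1}y \in N_A\}$; the effective 1-2-3 Theorem of \cite{BHMS3} produces a finite presentation of $P_A$ that is computable from the input. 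The homological heart of the argument is to identify $H_1(P_A,\Z)$ via the Lyndon--Hochschild--Serre five-term exact sequence for the extension $1 \to N_A \times N_A \to P_A \to Q_A \to 1$,
\[ H_2(P_A) \to H_2(Q_A) \to (N_A \times N_A)/[P_A, N_A \times N_A] \to H_1(P_A) \to H_1(Q_A) \to 0. \]
With $H_1(Q_A) = 0$ and a careful Rips-construction choice controlling $N_A^{\mathrm{ab}}$ as a $Q_A$-module, the middle terms can be analysed so that perfectness of $P_A$ becomes equivalent to triviality of $A$, completing the reduction.

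The main obstacle is the synchronisation of three simultaneous demands on the Rips construction: virtual specialness of $\G_A$ (for residual finiteness and biautomaticity of $G_A$), the type-$F_3$ hypothesis on $Q_A$ needed by the 1-2-3 Theorem, and sufficient control of $N_A^{\mathrm{ab}}$ as a $Q_A$-module to make $H_1(P_A,\Z)$ an algorithmically legible function of $A$. This is precisely where the paper's emphasis on the full input data of the effective 1-2-3 Theorem \cite{BW,BHMS3} and on the deep work of Wise, Agol, and Serre \cite{wiseSC,wiseBIG,agol,serre} becomes essential; once those ingredients are correctly aligned, the undecidability of triviality of $A$ transfers to undecidability of perfectness of $P_A$.
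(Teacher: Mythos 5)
Your overall architecture mirrors the paper's (Rips construction, fibre product, homology of $P$ computed by five\mbox{-}term exact sequences, with undecidability coming from the seed group $Q$), but three specific choices you make create genuine gaps.

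\textbf{The seed of undecidability and type $F_3$.} You propose to start from Corollary~\ref{c:cor}, feeding in a recursive untangled abelian group $A$ to get $Q_A$ with $H_1(Q_A)=0$, $H_2(Q_A)\cong A$. But the 1-2-3 Theorem needs $Q_A$ to be of type $F_3$, and the group $Q_{\mathcal{A}}=U\ast_{G_{\mathcal{A}}}U$ from Corollary~\ref{c:cor} is an amalgam over a subgroup $G_{\mathcal{A}}$ that is only finitely generated, not finitely presented; such an amalgam is finitely presented but there is no reason for it to be $F_3$. Your proposed repair---replacing $Q_A$ by $Q_A\times U$---does not help: a direct product $B\times C$ is of type $F_3$ only if both factors are, so multiplying by $U$ (which is finitely presented but not claimed to be $F_3$) cannot upgrade $Q_A$ to $F_3$. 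The paper sidesteps this entirely by using the Collins--Miller construction (its Theorem 4.1), which produces a recursive family of perfect groups $Q_n$ whose presentations are \emph{aspherical} when $Q_n\ne 1$; asphericity of a finite presentation gives a finite 2-dimensional $K(Q_n,1)$, hence type $F_\infty$ for free, and simultaneously gives explicit control of $H_2(Q_n,\Z)$ as a free abelian group of known positive rank. That dual role of asphericity is the real engine here, and your construction lacks it. (Your unsubstantiated appeal to undecidability of triviality for recursive untangled abelian groups, whatever its status, would not be needed if you used Collins--Miller; and with your version you would also have to explain why $H_2(Q_A)\cong A$ is never a nonzero group that is killed under conjugation coinvariants, which Collins--Miller's free-abelian $H_2$ makes automatic.)

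\textbf{The missing universal central extension.} You feed $Q_A$ into a Rips construction and work directly with the hyperbolic output $\G_A$ and $G_A=\G_A\times\G_A$. But a Rips group has an aspherical small cancellation presentation with more relators than generators, so once you have forced $H_1(\G_A)=0$, $H_2(\G_A,\Z)$ is a nonzero free abelian group. In the five-term exact sequence
\[
H_2(\G_A)\to H_2(Q_A)\to H_0(Q_A,H_1 N_A)\to H_1(\G_A)\to\cdots
\]
the term $H_2(\G_A)$ then does not vanish, and there is no a priori reason for the map $H_2(\G_A)\to H_2(Q_A)$ to be zero; the clean isomorphism $H_2(Q_A)\cong H_0(Q_A,H_1 N_A)$ that the argument needs is lost. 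The paper resolves this exactly by passing to the \emph{universal central extension} $\widetilde{\G}_A\to\G_A$ before forming the fibre product: by Proposition~\ref{p:uce}(3), $H_1(\widetilde{\G}_A)=H_2(\widetilde{\G}_A)=0$, which kills both end terms in the relevant five-term sequences and yields $H_1(P_A)\cong H_2(Q_A)$ on the nose (via Lemma~\ref{l:sameH_0}). The price is that $\widetilde{\G}_A$ is no longer hyperbolic, and one then has to invoke Neumann--Reeves for biautomaticity of central extensions, and Serre goodness (via Wise--Agol specialness) for residual finiteness of the central extension. Your phrase ``a careful Rips-construction choice controlling $N_A^{\mathrm{ab}}$ as a $Q_A$-module'' is doing a lot of unstated work in place of this step, and as written it does not close the argument.

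\textbf{Choice of extension in the five-term argument.} You analyse $1\to N_A\times N_A\to P_A\to Q_A\to 1$, which requires understanding $H_0(Q_A,H_1(N_A\times N_A))$ and whether $H_2(Q_A)$ injects into it; this is awkward. The paper instead writes $P_n\cong \widetilde N_n\rtimes\widetilde\G_n$ and compares the five-term sequences of $1\to\widetilde N_n\to P_n\to\widetilde\G_n\to 1$ and $1\to\widetilde N_n\to\widetilde\G_n\to Q_n\to 1$, using Lemma~\ref{l:sameH_0} to identify the coinvariant terms. This is the cleaner route and avoids the cokernel ambiguity your version leaves unresolved.

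In short: the undecidability seed must supply (i) recursively enumerated \emph{perfect} groups with (ii) aspherical presentations, and one must (iii) pass to universal central extensions of the Rips groups before forming fibre products. Those three steps are absent or replaced by weaker ingredients in your proposal, and each absence is a load-bearing gap rather than a detail.
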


To prove this theorem, we construct a recursive sequence $(G_n,H_n)$
where $G_n$ is biautomatic
group given by a finite presentation $\< X\mid R_n\>$ and
$H_n<G_n$ is the subgroup generated by a finite set $S_n$ of words in the generators $X$;
the cardinality of $S_n$ and $R_n$ does not vary with $n$. The construction ensures that $H_n$ is finitely presentable, but a consequence of the theorem is that there is no algorithm that can use this knowledge to construct an 
explicit presentation of $H_n$. An artefact of the construction is that 
each $G_n$ has a finite classifying space $K(G_n,1)$.

Besides picking up on the themes of Gilbert mentioned above, Theorem \ref{t:perfect}
also resonates with a longstanding theme in his work, often pursued in partnership with
Chuck Miller, whereby one transmits undecidability
phenomena from one context to another in group theory by building groups
that encode the appropriate phenomenon by means of graphs of groups, wreath
products, directly constructed presentations, or whatever else one can dream up. This 
is already evident in his early papers, particularly \cite{BN}.

I have discussed three themes from Gilbert Baumslag's oeuvre:
{\bf{(i)}} decision problems and their transmission through explicit constructions;
{\bf{(ii)}} homology of groups; and
{\bf{(iii)}} subdirect products of free and related groups. To these
I add two more (neglecting others):
{\bf{(iv)}} a skill for constructing explicit groups that illuminate important
phenomena, inspired in large part by his formative interactions with Graham Higman,
Bernhard Neumann and Wilhelm Magnus; and {\bf{(v)}}
 an enduring interest in residual finiteness and nilpotence, 
 with an associated interest in profinite and pronilpotent  completions of groups.

In the 1970s Gilbert and his students,
particularly Fred Pickel \cite{pick, GPS}, explored the extent to which
finitely generated, residually
finite groups are determined by their finite images (equivalently,
their profinite completions -- see Section \ref{s:pf}). He maintained
a particular focus on residually nilpotent groups,  
motivated in particular by a desire to find the right context
in which to understand parafree groups. In his survey \cite{gilb-nilp}
he writes:``More than 35 years ago, Hanna Neumann asked whether free groups
can be characterised in terms of their lower central series. Parafree
groups grew out of an attempt to answer her question." 
It is a theme
that he returned to often; see \cite{gilb-nilp}. 
I was drawn to the study of profinite completions later, by 
Fritz Grunewald \cite{BG}. As I have become increasingly
absorbed by it, Gilbert's illuminating 
examples and provocative questions have been invigorating. 

I shall present one result concerning profinite completions here
and further results in the sequel to this paper \cite{mrb:next}.

The proof of the following
result combines a refinement of Corollary \ref{c:cor} and parts of the proof of Theorem \ref{t:perfect} 
with a somewhat involved spectral sequence argument.
It extends arguments from  Section 6 of
\cite{BReid} that were developed to answer questions posed by Gilbert in \cite{gilb-nilp}.

Recall that the profinite completion $\widehat G$ of a group $G$ is the inverse
limit of the directed system of finite quotients of $G$. 
A {\em{Grothendieck pair}} \cite{groth}
is a pair of residually finite
groups $\iota:A\hookrightarrow B$ such that the induced map of
profinite completions $\hat{\iota}:\wh{A}\to\wh{B}$ is an isomorphism. Recall also that a group
$G$ is termed {\em{superperfect}} if $H_1(G,\Z)=H_2(G,\Z)=0$.

\begin{thmA}\label{t:thmC} For every recursively presented abelian group $A$ there exists a Grothendieck pair
$P_A\hookrightarrow G_A$ where $G_A$  is a torsion-free biautomatic group that is residually-finite, superperfect and has a finite classifying space, while
$P_A$ is finitely generated with $H_2(P_A,\Z)\cong A$.
\end{thmA}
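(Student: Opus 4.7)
The plan is to construct $(P_A, G_A)$ by combining three ingredients: a refinement of Corollary \ref{c:cor}, the cubulated/Rips-style machinery that produces the biautomatic groups underlying Theorem \ref{t:perfect}, and the fibre-product construction of the 1-2-3 Theorem from \cite{bbms2}; one then extracts $H_2(P_A,\Z)$ via a Lyndon-Hochschild-Serre spectral sequence.

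The first step is to produce a finitely presented group $Q_A$, of type $F_3$, with no proper subgroups of finite index, $H_1(Q_A,\Z)=0$, and $H_2(Q_A,\Z)\cong A$. Starting from the given recursive presentation of $A$, one converts it to an untangled one by standard Smith-form bookkeeping, then runs the Baumslag-Dyer-Miller construction underlying Corollary \ref{c:cor} with $A$ placed in the slot governing $H_2$, absorbing the output into the acyclic group $U$ of Theorem \ref{t:thm1} so as to kill residual finiteness. The essential strengthening beyond Corollary \ref{c:cor} is the $F_3$ property, together with the flexibility to allow $A$ to be an arbitrary recursively presented abelian group rather than a finitely generated one; this is achieved by ensuring that at each stage of the BDM/Higman rewriting only finitely many cells are introduced in dimensions up to three.

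Next, adapt the Rips-style construction used in the proof of Theorem \ref{t:perfect}, together with Wise's separability theorem and Agol's virtual specialness theorem, to embed $Q_A$ as a quotient
\[
1 \to N \to H \to Q_A \to 1
\]
with $N$ finitely generated and $H$ a torsion-free biautomatic residually finite group that is superperfect and has a finite classifying space. Set $G_A := H\times H$; by K\"unneth the superperfectness of $H$ transfers to $G_A$, and the other properties pass to the product as well. Form the fibre product $P_A := \{(h_1,h_2) \in H\times H : h_1 N = h_2 N\}$; since $N$ is finitely generated, $H$ is finitely presented, and $Q_A$ is of type $F_3$, the 1-2-3 Theorem shows that $P_A$ is finitely presented, hence in particular finitely generated.

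It remains to establish the Grothendieck property and the isomorphism $H_2(P_A,\Z)\cong A$. For the Grothendieck property one adapts the Platonov-Tavgen criterion: $Q_A$ has no finite quotients and $H$ is residually finite, while the superperfectness of $H$ substitutes for the usual hypothesis $H_2(Q_A,\Z)=0$ by killing the relevant obstruction classes in $H^2(Q_A;M)$ for finite $Q_A$-modules $M$, via Serre's cohomological criterion for residual finiteness \cite{serre}. For the homology one runs the LHS spectral sequence
\[
E^2_{p,q}=H_p\bigl(Q_A;\,H_q(N\times N,\Z)\bigr) \;\Longrightarrow\; H_{p+q}(P_A,\Z),
\]
alongside the analogous sequence for $1\to N\to H\to Q_A\to 1$; exploiting $H_1(H,\Z)=H_2(H,\Z)=0$ one argues that in total degree $2$ only $E^\infty_{0,2}$ survives, and identifies it with $H_2(Q_A,\Z)\cong A$. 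The main obstacle is precisely this spectral-sequence bookkeeping: controlling the $Q_A$-module structures on the $H_q(N\times N,\Z)$ and the differentials $d_2,d_3$ requires naturality of LHS with respect to $P_A\hookrightarrow H\times H$ together with a careful transfer of the vanishing of $H_\ast(H)$ in low degrees across the two spectral sequences.
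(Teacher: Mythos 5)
Your proposal gets the broad strategy right (Rips + universal central extension + fibre product + Platonov--Tavgen + an LHS computation), but the placement of $A$ in the homology of $Q_A$ is wrong, and this breaks the argument at two independent points.

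You propose to build a finitely presented $Q_A$ of type $F_3$ with $H_1(Q_A,\Z)=0$ and $H_2(Q_A,\Z)\cong A$. This cannot be done when $A$ is not finitely generated: a finitely presented group is of type $FP_2$, so $H_2(Q,\Z)$ is always a finitely generated abelian group. Since the theorem allows $A\cong\Q$ or $\bigoplus_p\Z/p\Z$, there is no such $Q_A$. Independently, even when $A$ happens to be finitely generated, the Platonov--Tavgen criterion (Proposition~\ref{l:PT}) requires $H_2(Q_A,\Z)=0$; your suggestion that superperfectness of $H$ ``substitutes for'' this hypothesis by killing obstruction classes is not correct --- the obstruction to lifting through the fibre product lives in $H^2(Q_A;M)$, which is not controlled by properties of $H$, and if $H_2(Q_A,\Z)\cong A\ne 0$ the criterion simply fails.

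The fix, which is what the paper does, is to place $A$ one degree higher: use Corollary~\ref{c:H3} to get a finitely presented $Q_A$ with no proper finite-index subgroups, $H_1(Q_A,\Z)=H_2(Q_A,\Z)=0$, and $H_3(Q_A,\Z)\cong A$. The condition $H_2(Q_A,\Z)=0$ is then available for Proposition~\ref{l:PT}, and no finite-generation obstruction arises in degree~$3$. Correspondingly, $Q_A$ need not be type $F_3$ (indeed it cannot be when $A$ is infinitely generated), so the 1-2-3 Theorem does not apply; one gets only that $P_A$ is finitely generated, via Lemma~\ref{l:genP}, which is exactly what the statement asserts. The LHS computation must then produce a degree shift: one shows $H_2(P_A,\Z)\cong H_0(\tilde\G_A,H_2\tilde N)\cong H_0(Q_A,H_2\tilde N)\cong H_3(Q_A,\Z)$, and for the last isomorphism one needs that $\eta_*\colon H_3(\tilde\G_A,\Z)\to H_3(Q_A,\Z)$ is zero --- this is where the asphericity of the small cancellation presentation for $\G_A$ (giving $H_3(\G_A,\Z)=0$) and the factorisation $\tilde\G_A\to\G_A\to Q_A$ are used. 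Your sketch of the spectral sequence argument misses this degree shift and the associated vanishing input, which are the heart of the proof.

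You also do not explicitly pass to the universal central extension $\tilde\G_A$ of the Rips group $\G_A$: the perfect Rips group is hyperbolic and hence biautomatic, but it is not superperfect, and $\widetilde{\G}_A$ is what furnishes a superperfect, torsion-free, residually finite biautomatic factor (via Neumann--Reeves and the goodness/specialness argument from Remark~\ref{r:special}).
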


Note that $A$ need not be finitely generated here; for example, $A$ might be the group of
additive rationals $\Q$, or the direct sum of the cyclic groups $\Z/p\Z$ of all
prime orders.

The diverse background material that we require for the main results is gathered in Section \ref{s2}. 

\section{Preliminaries}\label{s2}

I shall assume that the reader is familiar with the basic theory 
of homology of groups  (\cite{bieri}
and \cite{brown} are excellent references) and the definitions of small cancellation
theory \cite{LS} and hyperbolic groups \cite{gromov}.  Recall that a classifying space $K(G,1)$ for a discrete group $G$
is a CW-complex with fundamental group $G$ and contractible universal cover.
$H_n(G,\Z) = H_n(K(G,1),\Z)$. One says that $G$ is of {\em type $F_n$} if there
is a classifying space $K(G,1)$ with finite $n$-skeleton. Finite generation 
is equivalent to type $F_1$ and finite presentability is equivalent to type $F_2$.

\subsection{Fibre Products}
Associated to a short exact sequence of groups $1\to N\to G\overset{\eta}\to Q\to 1$
one has the {\em fibre product} 
$$
P = \{(g,h) \mid \eta(g)=\eta(h)\} < G\times G.
$$
The restriction to $P$ of the projection $G\times G\to 1\times G$ 
has kernel $N\times 1$ and can be split
by sending $(1,g)$ to $(g,g)$. Thus $P \cong N\rtimes G$ where the
action is by conjugation in $G$.

\begin{theorem*}[\cite{bbms2}]\label{t:123}
Let $1\to N\to G\overset{\eta}\to Q\to 1$ be a short exact sequence of groups. If $N$ is finitely
generated, $G$ is finitely presented, and $Q$ is of type $F_3$, then the
associated fibre product $P<G\times G$ is finitely presented.
\end{theorem*}

The {\em effective 1-2-3 Theorem}, proved in \cite{BHMS2}, 
provides an algorithm that, given the following data, will construct
a finite presentation for $P$: a finite  presentation
$G=\<A\mid S\>$ is given, with a finite generating set for $N$ (as words in the
generators $A$), a finite presentation $\mathcal{P}$ for $Q$, a word defining  
 $\eta(a)$ for each  $ a\in A$, and a set of generators for $\pi_2\mathcal{P}$ as
a $\Z Q$-module. 

The proof of Theorem \ref{t:perfect} shows that one cannot dispense with this
last piece of data, while Theorem \ref{t:thmC} shows that the 1-2-3 Theorem would
fail if one assumed only that $Q$ was finitely presented. 

By definition, a generating set $A$ for $G$ defines an epimorphism $\mu:F\to G$,
where $F$ is the free group on $A$. We can choose a different presentation
$Q=\<A\mid R\>$ such that the identity map on $A$ defines the
composition $\eta\circ \mu: F\to Q$. The following lemma is easily checked.

\begin{lemma}\label{l:genP}
With the above notation, the fibre product $P<G\times G$ is generated by the image of
$
\{ (a,a),\, (r,1) \mid a\in A,\, r\in R\} \subset F\times F.$
\end{lemma}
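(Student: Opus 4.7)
The plan is to exploit the semidirect product decomposition $P\cong N\rtimes G$ displayed just before the lemma, and verify that the two factors are each captured by the proposed generating set. Write $\Delta G:=\{(g,g)\mid g\in G\}<P$; this is the image of the splitting $(1,g)\mapsto(g,g)$, and together with $N\times\{1\}$ it generates $P$ (indeed every $(g,h)\in P$ factors as $(gh^{-1},1)\cdot(h,h)$, and $gh^{-1}\in N$ because $\eta(g)=\eta(h)$).

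First I would observe that $\Delta G$ is generated by $\{(\mu(a),\mu(a))\mid a\in A\}$, since $\mu(A)$ generates $G$. Second, I would record the conjugation identity $(g,g)(n,1)(g,g)^{-1}=(gng^{-1},1)$ in $P$; it says that the subgroup of $P$ generated by $\Delta G$ together with $\{(\mu(r),1)\mid r\in R\}$ contains every element of the form $(h,1)$ where $h$ lies in the normal closure of $\mu(R)$ in $G$.

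The one substantive step is therefore to check that the normal closure of $\mu(R)$ in $G$ equals $N$. This is where the hypothesis on the presentation $Q=\<A\mid R\>$ is used: by construction the identity map on $A$ induces $\eta\circ\mu:F\onto Q$, so $R$ normally generates $\ker(\eta\circ\mu)$ in $F$. Since $\ker(\eta\circ\mu)=\mu^{-1}(\ker\eta)=\mu^{-1}(N)$ and $\mu$ is surjective, applying $\mu$ shows that the normal closure of $\mu(R)$ in $G$ equals $\mu(\mu^{-1}(N))=N$, as required.

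Assembling: given $(g,h)\in P$ write it as $(gh^{-1},1)\cdot(h,h)$; express $h$ as a word in $\mu(A)$, so $(h,h)$ is a word in the proposed diagonal generators; express $gh^{-1}\in N$ as a product of $G$-conjugates of elements of $\mu(R)^{\pm1}$, and translate each such conjugate into a conjugate of $(\mu(r),1)$ by a word in the $(\mu(a),\mu(a))$. There is no real obstacle here; the only place care is needed is to invoke normal generation rather than mere generation in the defining property of the presentation of $Q$, which is exactly what ``easily checked'' alludes to in the statement.
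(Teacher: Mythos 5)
Your proof is correct and is exactly the argument that the paper has in mind when it says the lemma is ``easily checked'': you use the decomposition $P\cong N\rtimes G$ that the paper sets up in the paragraph on fibre products immediately before the lemma, identify the diagonal $\Delta G$ as the image of the $\mu(a)$ and $N\times 1$ as the normal closure of the $(\mu(r),1)$ under $\Delta G$-conjugation, and the normal-generation step correctly uses that $\mu$ is a surjection carrying $\ker(\eta\circ\mu)$ onto $N$.
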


We shall also need an observation that is useful when computing with the LHS spectral sequences
associated to  $1\to N\to G\to Q\to 1$ and to  $1\to N\times 1 \to P\to 1\times G\to 1$. In the first case, 
the term $H_0(Q,H_1N)$ arises,
which by definition is the group of coinvariants for the
action of $G$ on $N$ by conjugation, i.e.~$N/[N,G]$. The second spectral sequence
contains the term $H_0(G,H_1N)$; here the action of $g\in G$ is induced
by conjugation of $(g,g)$ on $N\times 1 < G\times G$, so  
$H_0(G,H_1N)$ is again $N/[N,G]$. More generally, because the action of $G$ on  $N$ is the same
in both cases  we have:

\begin{lemma}\label{l:sameH_0} In the context described above,
$H_0(Q,H_kN)\cong H_0(G,H_kN)$ for all $k\ge 0$.
\end{lemma}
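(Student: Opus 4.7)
The plan is to reduce the statement to the classical fact that inner automorphisms of a group act trivially on its homology. Once this is in place, the two coinvariant groups in question are defined as quotients by exactly the same relations, and the asserted isomorphism is essentially tautological.

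First I would pin down the two $G$-actions on $H_k(N,\Z)$ appearing in the two spectral sequences. In the LHS spectral sequence of $1\to N\to G\to Q\to 1$, the term $H_0(Q,H_kN)$ uses the $Q$-action on $H_kN$ induced by the conjugation action of $G$ on $N$; this descends from $G$ to $Q$ by the classical fact alluded to above. In the LHS spectral sequence of $1\to N\times 1\to P\to 1\times G\to 1$, the term $H_0(G,H_kN)$ uses the $G$-action on $H_kN$ induced by conjugation of the diagonally embedded $g\mapsto(g,g)$ on the factor $N\times 1$; this is precisely the original $G$-conjugation action on $N$, as already observed in the paragraph preceding the lemma.

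Next I would verify that this $G$-action factors through $Q=G/N$: conjugation by $n\in N$ is an inner automorphism of $N$ and hence induces the identity on $H_k(N,\Z)$ for every $k\ge 1$, while for $k=0$ the module is $\Z$ with trivial action and the claim is vacuous. Thus the $G$-action on $H_kN$ descends to the $Q$-action used in the first spectral sequence.

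Finally, for any $Q$-module $M$ pulled back along the surjection $\eta:G\to Q$, the coinvariant groups $M_G=M/\langle (g-1)m\rangle$ and $M_Q=M/\langle (q-1)m\rangle$ coincide, because $(g-1)m=(\eta(g)-1)m$ and $\eta$ is surjective, so the defining relations generate the same subgroup of $M$. Applying this with $M=H_k(N,\Z)$ yields the claimed isomorphism. I do not anticipate any substantive obstacle: the content reduces entirely to the triviality of the inner-automorphism action on homology combined with the tautology for coinvariants pulled back along a surjection.
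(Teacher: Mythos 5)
Your proof is correct and follows essentially the same route as the paper, which simply observes (in the paragraph preceding the lemma) that the $Q$-action and the $G$-action on $H_kN$ are both induced by the same conjugation action of $G$ on $N$, so the coinvariants coincide. Your write-up merely makes explicit the two ingredients the paper leaves implicit, namely that inner automorphisms act trivially on homology and that coinvariants are unchanged under pullback along a surjection.
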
  

\subsection{Universal central extensions}  

A {\em central extension} of a group $Q$ is
a group $\widetilde Q$ equipped with a homomorphism $\pi:\widetilde{Q}\to Q$ whose kernel is central
in $\widetilde Q$. Such an extension is {\em universal} if given any other central extension
$\pi': E \to Q$ of $Q$, there is a unique homomorphism $f : \widetilde Q \to  E$ such that
$\pi'\circ f = \pi$. 

The standard reference for universal central extensions is 
\cite{milnor} pp. 43--47. The
properties that we need here are the following,
which all follow easily from standard facts (see \cite{mb:karl}
for details and references).

\begin{proposition}\label{p:uce} $\ $
\begin{enumerate}
\item $Q$ has a universal central extension $\widetilde Q\to Q$
if and only if $Q$ is perfect. (If it exists, $\widetilde Q\to Q$ is unique up to
isomorphism over $Q$.) 
\item There is a short exact sequence
$$1 \to  H_2(Q,\Z) \to \widetilde{Q}\to Q\to 1.$$
\item $H_1(\widetilde Q,\Z) = H_2(\widetilde Q,\Z)=0$.
\item If $Q$ has no non-trivial finite quotients, then neither does $\widetilde Q$.
%(6) If G is finitely presented then so is ˜ G.
\item For $k\ge 2$, if $Q$ is of type $F_k$ then so is $\widetilde{Q}$.
\item If  $Q$ has a compact 2-dimensional classifying space $K(Q, 1)$ then 
$\widetilde{Q}$ is torsion-free and has a compact
classifying space.
\end{enumerate}
\end{proposition}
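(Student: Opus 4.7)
The plan is to derive all six assertions from a single explicit model for the universal central extension. Starting from a presentation $Q = F/R$ with $F$ free, set
$$\widetilde{Q} := [F,F]/[F,R].$$
The natural surjection $\widetilde Q \to Q$ has kernel $(R \cap [F,F])/[F,R]$, which is exactly $H_2(Q,\Z)$ by Hopf's formula, and the commutator relation makes this kernel central. This establishes the short exact sequence of (2) and, when $Q$ is perfect, exhibits $\widetilde Q$ as the universal central extension: for any central extension $\pi' : E \to Q$, a lift of generators yields $\widetilde f : F \to E$; since $\widetilde f(R) \subseteq \ker \pi'$ is central, $\widetilde f$ kills every commutator $[s,r]$ with $r \in R$, so descends and restricts to the required map $\widetilde Q \to E$ over $Q$. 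Uniqueness follows because any two such maps differ by a homomorphism from the perfect group $\widetilde Q$ into the abelian group $\ker \pi'$. The converse half of (1) is the classical observation that when $Q^{ab} \neq 0$, the trivial extension $Q \times Q^{ab} \to Q$ admits two distinct lifts from $\widetilde Q$, obstructing universality.

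For (3), perfection of $\widetilde Q = [F,F]/[F,R]$ is visible from the construction and gives $H_1(\widetilde Q,\Z) = 0$. For $H_2(\widetilde Q,\Z) = 0$ I would invoke the ``UCE of a UCE'' principle: the universal central extension $\widetilde{\widetilde Q} \to \widetilde Q$ composes with $\pi$ to yield a central extension of $Q$, which by universality of $\pi$ admits a unique map from $\widetilde Q$; applying the uniqueness clause at $\widetilde Q$ forces the composition $\widetilde Q \to \widetilde{\widetilde Q} \to \widetilde Q$ to be the identity, so $\widetilde{\widetilde Q} \cong \widetilde Q$ and the kernel predicted by (2) applied to $\widetilde Q$, namely $H_2(\widetilde Q,\Z)$, vanishes. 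Part (4) now drops out: given $\phi : \widetilde Q \to K$ with $K$ finite, the subgroup $\phi(H_2(Q,\Z))$ is central in $\phi(\widetilde Q)$, and the induced map $Q \to \phi(\widetilde Q)/\phi(H_2(Q,\Z))$ is a finite quotient of $Q$, hence trivial by hypothesis; so $\phi(\widetilde Q)$ is abelian, and perfection of $\widetilde Q$ forces $\phi(\widetilde Q) = 1$.

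For (5) and (6) I would realise $K(\widetilde Q,1)$ as the total space of the principal fibration
$$K(H_2(Q,\Z),1) \longrightarrow K(\widetilde Q,1) \longrightarrow K(Q,1)$$
classified by the extension class in $H^2(Q,H_2(Q,\Z))$. If $Q$ is of type $F_k$ with $k \ge 2$, then $Q$ is of type $FP_2$ and so $H_2(Q,\Z)$ is a finitely generated abelian group; for any prescribed $k$ one can realise $K(H_2(Q,\Z),1)$ with finite $k$-skeleton, and fibring over the finite $k$-skeleton of $K(Q,1)$ assembles a $K(\widetilde Q,1)$ of type $F_k$, proving (5). For (6), a compact $2$-dimensional $K(Q,1)$ forces $H_2(Q,\Z)$ to be a subgroup of the top cellular chain group, hence free abelian of finite rank $n$; the fibre can then be taken to be a compact torus $T^n$, making $K(\widetilde Q,1)$ a compact CW-complex of dimension $n+2$, whose finite cohomological dimension forces $\widetilde Q$ to be torsion-free.

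The main obstacle is the finiteness bookkeeping in (5): one must produce a $K(H_2(Q,\Z),1)$ with genuinely finite $k$-skeleton even when $H_2(Q,\Z)$ has torsion (the naive models $B(\Z/m)$ being infinite-dimensional) and verify that the twisted assembly over the finite skeleton of $K(Q,1)$ does not inflate cell counts in low dimensions. Parts (1)--(4), by comparison, reduce to routine manipulations of Hopf's formula and the uniqueness clause of universality.
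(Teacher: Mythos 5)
Your proposal is correct and follows essentially the standard route: the paper itself gives no proof, citing Milnor (pp.~43--47) and \cite{mb:karl}, and the $\widetilde Q=[F,F]/[F,R]$ model with Hopf's formula, the ``UCE of a UCE'' argument, and the fibration $K(H_2Q,1)\to K(\widetilde Q,1)\to K(Q,1)$ are precisely the arguments one finds there.

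Two small remarks. First, the composite $\widetilde{\widetilde Q}\to\widetilde Q\to Q$ is a central extension only after a short argument (the Hall--Witt identity applied to the perfect group $\widetilde{\widetilde Q}$ shows the preimage of $\ker\pi$ is central); you use this silently, but it is a genuine lemma, not a triviality. Second, the ``main obstacle'' you flag in part (5) is not an obstacle at all: $H_2(Q,\Z)$ is finitely generated abelian, and each cyclic factor $\Z/m$ admits the infinite lens space $S^\infty/(\Z/m)$ as a $K(\Z/m,1)$ with one cell in every dimension, hence finite $k$-skeleton for every $k$; products preserve this. More directly, the standard fact that type $F_k$ is closed under extensions applies verbatim to $1\to H_2(Q,\Z)\to\widetilde Q\to Q\to 1$, since the kernel, being finitely generated abelian, is of type $F_\infty$. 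The same closure under extensions, in the ``type $F$'' version, disposes of part (6) at once, and finite cohomological dimension yields torsion-freeness as you say.
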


The following result is Corollary 3.6 of \cite{mb:karl}; the proof relies on an argument
due to Chuck Miller.

\begin{proposition}\label{p:tilde}
 There is an algorithm that, given a finite presentation $\<A\mid {R}\>$
of a perfect group $G$, will output a finite presentation $\< A\mid \overline{R}\>$ 
defining a group $\widetilde{G}$ such that  the identity map on the set $A$
induces the universal central extension $\widetilde G\to G$.
Furthermore, $|\overline{R}| = |A|(1 + |{R}|)$.
\end{proposition}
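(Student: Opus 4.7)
The plan is to write down an explicit presentation whose $|A|(1+|R|)$ relators come in two families: relators of the form $c_a a^{-1}$ asserting that each generator $a \in A$ equals a specified product of commutators (using that $G$ is perfect), and commutator relators $[a,r]$ forcing every $r\in R$ to be central. I would then identify the resulting group with $\widetilde G$ via the universal property of Proposition~\ref{p:uce}(1).

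First I would address the algorithmic input. Let $F = F(A)$ and let $N$ be the normal closure of $R$, so $G = F/N$. Since $G$ is perfect, for each $a \in A$ there exists $c_a \in [F,F]$ with $c_a^{-1} a \in N$. To produce such a $c_a$ effectively, one enumerates in parallel all finite products of commutators $c$ in $F$ and all formal derivations witnessing that $c^{-1}a$ is a product of conjugates of elements of $R^{\pm 1}$; perfection of $G$ guarantees that this joint search terminates for each $a$. With the $c_a$ in hand, set
$$\overline R \;=\; \{\,c_a a^{-1} : a\in A\,\} \;\cup\; \{\,[a,r] : a\in A,\ r\in R\,\},$$
which has $|A|(1+|R|)$ elements, and let $H = \langle A \mid \overline R\rangle$.

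Next I would verify that $\pi_H : H \twoheadrightarrow G$, induced by the identity on $A$, is a central extension. Every relator in $\overline R$ lies in $N$, so $\pi_H$ is well defined; and each $r\in R$ commutes with every generator of $H$, hence is central in $H$, so the kernel of $\pi_H$ — which is the normal closure of $R$ in $H$ — lies in the centre. Proposition~\ref{p:uce}(1) then supplies a unique map $\theta : \widetilde G \to H$ over $G$. For the inverse I would use the identification $\widetilde G = [F,F]/[F,N]$ and define $\psi : F \to \widetilde G$ by $\psi(a) = c_a \bmod [F,N]$, then check that $\psi$ descends to $H$: the first family of relators vanishes by construction, and for $[a,r]$, if $r'\in F$ denotes the word obtained from $r$ by substituting $c_{a'}$ for each letter $a'$, then $r' \in N\cap[F,F]$ (since each $c_{a'} \in [F,F]$ and $a' \equiv c_{a'} \pmod N$), so $\psi([a,r]) = [c_a, r'] \in [[F,F], N\cap[F,F]] \subseteq [F,N]$ is trivial in $\widetilde G$. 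The compositions $\theta\circ\psi$ and $\psi\circ\theta$ are identities: the first because $c_a = a$ in $H$ (by the relator), the second by the uniqueness clause of universality.

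The main obstacle will be the well-definedness of $\psi$ on $H$, specifically the verification that after substituting $c_{a'}$ for each $a'$ in a relator $r$ the resulting commutator $[c_a, r']$ dies in $[F,F]/[F,N]$. This rests on the standard inclusion $[[F,F], N\cap[F,F]] \subseteq [F,N]$ together with the observation that the substituted word $r'$ still lies in $N$; the algorithmic content is otherwise routine given the semi-decidable search in Step~1.
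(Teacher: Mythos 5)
Your presentation $\overline{R}=\{c_a a^{-1} : a\in A\}\cup\{[a,r] : a\in A,\ r\in R\}$ and the overall strategy (build $H=\langle A\mid\overline{R}\rangle$, verify it is a perfect central extension of $G$, invoke universality for $\theta:\widetilde{G}\to H$, and construct an explicit inverse $\psi$ using $\widetilde{G}=[F,F]/[F,N]$) is the standard argument; the paper itself only cites Corollary~3.6 of \cite{mb:karl} rather than giving a proof, so there is nothing to compare against directly. Your count $|\overline{R}|=|A|(1+|R|)$, the semi-decidable search for $c_a$, the centrality of $\ker\pi_H$, and the treatment of the relators $[a,r]$ via $[[F,F],\,N\cap[F,F]]\subseteq[F,N]$ are all correct.

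There is, however, a real gap exactly where you wave your hands: the assertion that ``the first family of relators vanishes by construction.'' By your definition of $\psi$, $\psi(c_aa^{-1})=\psi(c_a)\cdot c_a^{-1}\bmod[F,N]$, where $\psi(c_a)$ is the \emph{substituted} word $c_a(c_{a_1},\dots,c_{a_k})$, not the element $c_a$ itself. A priori one only knows $\psi(c_a)\cdot c_a^{-1}\in N\cap[F,F]$, and $(N\cap[F,F])/[F,N]\cong H_2(G)$ is precisely the group that need not vanish — if it did, there would be nothing to prove. You must argue that $\psi(c_a)\equiv c_a\pmod{[F,N]}$. One clean way: your argument for $[a,r]$ already shows $\psi([F,N])\subseteq[F,N]$, so $\psi$ descends to $\bar\psi:F/[F,N]\to[F,F]/[F,N]$; since the image of $N$ is central in $F/[F,N]$ and $\psi(a')\equiv a'\pmod N$ for all $a'$, the commutator identities for central elements give $\bar\psi([f,g])=[f,g]$, i.e.\ $\bar\psi$ restricts to the identity on $[F,F]/[F,N]$. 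As $c_a\in[F,F]$, this yields $\psi(c_a)=c_a$ in $\widetilde{G}$, and only now does the first family genuinely die. (Equivalently: $c_a\in[F,F]$ has exponent sum $0$ in every generator, so replacing each $a'$ by $a'n_{a'}$ with $n_{a'}\in N$ central mod $[F,N]$ changes $c_a$ by $\prod n_{a'}^{\,0}=1$.) You anticipated and handled the analogous point for $[a,r]$; this one needs the same care, and without it the identification $H\cong\widetilde{G}$ is not established.
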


\subsection{Applications of the Lyndon-Hochshild-Serre Spectral Sequence}\label{s:LHS}

 Besides the Mayer-Vietoris sequence,
the main tool that we draw on in our calculations 
of homology groups is the Lyndon-Hochshild-Serre
spectral sequence associated to a short exact sequence
of groups  $1\to N \to G\to Q\to 1$.   The $E^2$ page of this
spectral sequence is $E^2_{pq}=H_p(Q,H_q(N,\Z))$, and 
the sequence converges to $H_n(G,\Z)$; see \cite{brown}, p.171. 
A particularly useful region of the spectral sequence is the corner of the first
quadrant, from which one can isolate the 5-term exact sequence
\begin{equation}\label{5term}
H_2(G,\Z)\to H_2(Q,\Z)\to H_0(Q,\, H_1(N,\Z)) \to H_1(G,\Z) \to H_1(Q,\Z)\to 0.
\end{equation}
From this we immediately have:
\begin{lemma}\label{l0}
Let $1\to N\to G\to Q\to 1$ be a short exact sequence of groups. 
If $H_1(G,\Z)=H_2(G,\Z)=0$, then $H_2(Q,\Z)\cong H_0(Q,H_1N)$. 
\end{lemma}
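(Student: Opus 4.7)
The plan is to read off the conclusion directly from the five-term exact sequence (\ref{5term}) displayed just above the lemma; no further machinery is needed. Substituting the hypotheses $H_1(G,\Z)=0$ and $H_2(G,\Z)=0$ into
$$H_2(G,\Z)\to H_2(Q,\Z)\to H_0(Q,H_1(N,\Z))\to H_1(G,\Z)\to H_1(Q,\Z)\to 0$$
collapses it to
$$0\to H_2(Q,\Z)\to H_0(Q,H_1(N,\Z))\to 0,$$
so the middle map is an isomorphism: it is injective because the term to its left is $0$, and surjective because its cokernel embeds in $H_1(G,\Z)=0$.

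There is no genuine obstacle here, provided one is willing to quote (\ref{5term}) as a black box. For the sake of self-containment one could instead argue on the $E^2$-page of the LHS spectral sequence of $1\to N\to G\to Q\to 1$: the differential $d^2\colon E^2_{2,0}=H_2(Q,\Z)\to E^2_{0,1}=H_0(Q,H_1N)$ has cokernel $E^\infty_{0,1}$, which is a subquotient of $H_1(G,\Z)$, and kernel $E^\infty_{2,0}$, which is a subquotient of $H_2(G,\Z)$. Under the given hypotheses both subquotients vanish, so $d^2$ is an isomorphism, yielding $H_2(Q,\Z)\cong H_0(Q,H_1N)$ as claimed.
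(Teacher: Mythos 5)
Your proof is correct and matches the paper's, which simply reads the conclusion off the five-term exact sequence (\ref{5term}) displayed immediately before the lemma (the paper states ``From this we immediately have'' and gives no further argument). Your supplementary spectral-sequence justification of that sequence is also accurate but not needed.
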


The following calculations with the LHS spectral sequence will be needed in the
proofs of our main results.  

\begin{lemma} \label{l:acylic-fibre} If $1\to N\to G\to Q\to 1$
is exact and $N$ is acyclic, then $G\to Q$
induces an isomorphism $H_n(G,\Z)\to H_n(Q,\Z)$
for every $n$.
\end{lemma}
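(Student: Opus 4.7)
The plan is to apply the Lyndon--Hochschild--Serre spectral sequence associated to the short exact sequence $1\to N\to G\to Q\to 1$, exactly as recalled in Section \ref{s:LHS}. Its $E^2$ page is
\[
E^2_{pq} = H_p(Q,H_q(N,\Z)),
\]
and it converges to $H_{p+q}(G,\Z)$.

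Since $N$ is acyclic, $H_q(N,\Z)=0$ for every $q\ge 1$, so the entire $E^2$ page vanishes outside the row $q=0$. In the remaining row, the action of $Q$ on $H_0(N,\Z)=\Z$ is trivial, so $E^2_{p,0}=H_p(Q,\Z)$. With only one non-zero row, every differential on every page has either source or target equal to zero, so the spectral sequence degenerates at $E^2$ and $E^\infty_{p,0}=H_p(Q,\Z)$. In particular the filtration on $H_n(G,\Z)$ has only one non-trivial quotient, giving an isomorphism $H_n(G,\Z)\cong H_n(Q,\Z)$ for all $n\ge 0$.

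To finish, I would note that this isomorphism is precisely the edge homomorphism along the bottom row of the spectral sequence, and that standard naturality (see \cite{brown}, Ch.~VII) identifies this edge map with the homomorphism induced by the quotient $G\to Q$. (One can check it directly by taking the Cartan--Eilenberg resolution of $\Z$ over $\Z G$ and observing that the row $q=0$ computes the homology of $Q$ with the map from $H_n(G,\Z)$ being the one coming from $G\to Q$.) Thus $H_n(G,\Z)\to H_n(Q,\Z)$ is an isomorphism for every $n$. There is no real obstacle here; the only point that requires a sentence of care is the identification of the edge map with the map induced by $G\to Q$, which is standard.
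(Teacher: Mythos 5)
Your argument is exactly the paper's: collapse of the LHS spectral sequence to the bottom row because $N$ is acyclic, followed by the identification of the edge homomorphism with the map induced by $G\to Q$. You've merely spelled out the edge-map naturality point that the paper leaves implicit; correct and the same approach.
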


\begin{proof} In the 
LHS spectral sequence, the only non-zero entries
on the second page are $E^2_{n0}=H_n(Q,\Z)$, so  $E^2=E^{\infty}$ 
and  $H_n(G,\Z)\to E^\infty_{n0}= H_n(Q,\Z)$ is an isomorphism.
\end{proof}

In the following lemmas, all homology groups have coefficients in the trivial module $\Z$
unless stated otherwise.

\begin{lemma}\label{l1}
Let $1\to N\to B\overset{\eta}\to C\to 1$ be a short exact sequence of groups. 
If $H_1N= H_2B=0$
and $\eta_*: H_3B\to H_3C$ is the zero map, then $H_0(C, H_2N)\cong H_3C$.
\end{lemma}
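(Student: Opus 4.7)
The natural plan is a direct analysis of the LHS spectral sequence of $1\to N\to B\to C\to 1$, whose $E^2$ page is $E^2_{p,q}=H_p(C,H_qN)$ and which converges to $H_{p+q}B$. The decisive feature is the hypothesis $H_1N=0$, which wipes out the entire row $q=1$; this is what isolates the two positions $(3,0)$ and $(0,2)$ from almost every differential.

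The first step is to show that $E^\infty_{3,0}$ and $E^\infty_{0,2}$ are computed from a single differential $d_3\colon E^3_{3,0}=H_3C\to E^3_{0,2}=H_0(C,H_2N)$. All $d_2$ differentials into or out of these positions either land in, or originate from, the vanishing $q=1$ row or positions outside the first quadrant; hence $E^3_{3,0}=H_3C$ and $E^3_{0,2}=H_0(C,H_2N)$. For $r\ge 4$, the targets and sources of $d_r$ at $(3,0)$ and $(0,2)$ also lie outside the first quadrant. Consequently $E^\infty_{3,0}=\ker d_3$ and $E^\infty_{0,2}=\mathrm{coker}\, d_3$.

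With that bookkeeping in hand, the argument is a two-line squeeze. The hypothesis $H_2B=0$ forces every filtration quotient of $H_2B$, including $E^\infty_{0,2}$, to vanish, giving surjectivity of $d_3$. The edge map $\eta_*\colon H_3B\to H_3C$ factors as $H_3B\twoheadrightarrow E^\infty_{3,0}\hookrightarrow H_3C$, so its image is exactly $\ker d_3$; the hypothesis $\eta_*=0$ therefore gives injectivity of $d_3$. Combining, $d_3$ is an isomorphism $H_3C\xrightarrow{\sim}H_0(C,H_2N)$, which is the desired conclusion.

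The only point demanding care, and the one place where an error would most easily creep in, is the translation of the hypothesis $\eta_*=0$ into $\ker d_3=0$: one must use the description of $E^\infty_{3,0}$ as the image, not the kernel, of the edge map, and one must have correctly ruled out all higher differentials at $(3,0)$. Everything else is a mechanical consequence of the vanishing of the row $q=1$.
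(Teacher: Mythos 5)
Your proof is correct and follows essentially the same route as the paper: both arguments run the LHS spectral sequence, use the vanishing of the $q=1$ row to isolate the single differential $d_3\colon H_3C\to H_0(C,H_2N)$, and then use $\eta_*=0$ to get $\ker d_3 = E^\infty_{3,0}=0$ and $H_2B=0$ to get $\operatorname{coker} d_3 = E^\infty_{0,2}=0$. The only difference is presentational: you spell out the first-quadrant bookkeeping for $r\ge 4$ a bit more explicitly.
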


\begin{proof} The hypothesis $H_1N=0$ implies that on the $E^2$-page
of the LHS spectral sequence, the terms in the second row $E^2_{*1}$ are all zero.
Thus all of the differentials emanating from the bottom two rows of the $E^2$-page
are zero, so $E^3_{p0}=E^2_{p0}$ for all $p\in\N$ and $E^3_{0q}=E^2_{0q}$ for $q\le 2$.
Hence the only non-zero differential emanating from place $(3,0)$ 
is on the $E^3$-page, and this is $d_3: H_3C\to H_0(C,H_2N)$.
The kernel of $d_3$ is $E^\infty_{30}$, the image of $\eta_*: H_3B\to H_3C$, which we have assumed to be zero. And the cokernel of $d_3$ is $E^\infty_{02}$, which injects into
$H_2B$, which we have also assumed is zero. Thus $d_3: H_3C\to H_0(C,H_2A)$ is an
isomorphism.
\end{proof}

\begin{lemma}\label{l2}  Let $P= A\rtimes B$. If $H_1A=H_2B=0$, then
$H_2P\cong H_0(B,H_2A)$. 
\end{lemma}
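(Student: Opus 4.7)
The plan is to apply the Lyndon--Hochschild--Serre spectral sequence to the short exact sequence $1\to A\to P\to B\to 1$ that underlies the semidirect product, and to exploit the fact that $P\to B$ admits a section (given by the inclusion $B\hookrightarrow A\rtimes B$). The $E^2$ page is $E^2_{p,q}=H_p(B,H_qA)$, and the two hypotheses kill most of what can interfere: because $H_1A=0$ the entire row $E^2_{*,1}$ is zero, and because $H_2B=0$ we also have $E^2_{2,0}=0$. On the total-degree-$2$ antidiagonal this leaves only the single term $E^2_{0,2}=H_0(B,H_2A)$.

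The bulk of the argument is to verify that this term survives to $E^\infty$. Outgoing differentials from $(0,2)$ land in positions with $p<0$, so they vanish automatically. Incoming differentials have the form $d^r:E^r_{r,\,3-r}\to E^r_{0,2}$; for $r=2$ the source $E^2_{2,1}$ is zero because $H_1A=0$, while for $r\ge 4$ the source sits in $q<0$. The sole remaining candidate is the transgression-type differential $d^3:E^3_{3,0}\to E^3_{0,2}$, and since $E^3_{3,0}$ equals $E^2_{3,0}=H_3B$ (no earlier differentials can alter it, again because $H_1A=0$), this is the one place where something could go wrong.

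Here the splitting does the work. The section $B\to P$ induces a splitting of the edge map $H_nP\to H_nB$, so $H_nB\to E^\infty_{n,0}$ is an isomorphism for every $n$; in particular $E^\infty_{3,0}=E^2_{3,0}$, which forces every outgoing differential from $E^r_{3,0}$ to be zero, and in particular $d^3:E^3_{3,0}\to E^3_{0,2}=0$. Consequently $E^\infty_{0,2}=E^2_{0,2}=H_0(B,H_2A)$, and since the other entries on the antidiagonal vanish, the filtration of $H_2P$ has a single non-trivial quotient, giving the desired isomorphism $H_2P\cong H_0(B,H_2A)$.

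The only delicate point (the ``obstacle'', such as it is) is the vanishing of $d^3:E^3_{3,0}\to E^3_{0,2}$; everything else is formal from the two vanishing hypotheses. This is precisely where the semidirect-product hypothesis is used in an essential way, rather than merely needing $A$ to be a normal subgroup.
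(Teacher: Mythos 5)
Your argument is correct and follows the same path as the paper's: both run the LHS spectral sequence for $1\to A\to P\to B\to 1$, observe that the hypotheses leave $E^2_{0,2}=H_0(B,H_2A)$ as the only nonzero entry on the $p+q=2$ antidiagonal, and use the retraction $P\to B$ (equivalently the section $B\hookrightarrow P$) to kill the would-be differential $d^3\colon H_3B\to H_0(B,H_2A)$. The only cosmetic difference is that you spell out why $E^3_{3,0}=E^2_{3,0}$, a step the paper subsumes in its remark that the retraction forces all differentials off the bottom row to vanish.
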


\begin{proof} By hypothesis, on the $E^2$-page of 
the LHS spectral sequence for $1\to A\to P\to B\to 1$,
the only non-zero term $E^2_{pq}$ with $p+q=2$ is $E^2_{02}=H_0(B,H_2A)$.
It follows that $H_2P \cong E^\infty_{02}=E^4_{02}$.
And since $E^2_{21}=H_2(B,H_1A)=0$, we also have $E^3_{02}=E^2_{02}=H_0(B,H_2A)$.
As $B$ is a retract of $P$, for every $n$ the natural map $H_nP\to H_nB$ is
surjective, so all differentials emanating from the bottom row of the spectral
sequence are zero. In particular, $d_3: H_3B\to H_0(B,H_2A)$ is the zero map,
and hence $E^4_{02}=E^3_{02}=H_0(B,H_2A)$.
\end{proof}

\begin{lemma}\label{l3}
Let $1\to N\to B\overset{\eta}\to C\to 1$ be a short exact sequence of groups. 
Suppose that
$H_1N$ is finitely generated, $H_1B=H_2C=0$ and $C$ has no non-trivial
finite quotients. Then $H_1N=0$.
\end{lemma}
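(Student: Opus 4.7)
The plan is to combine the 5-term exact sequence of the LHS spectral sequence with a mod-$p$ argument that exploits the fact that $C$ has no non-trivial finite quotients.

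First, I would apply the 5-term exact sequence \eqref{5term} for $1\to N\to B\to C\to 1$:
\[
H_2B\to H_2C\to H_0(C,H_1N)\to H_1B\to H_1C\to 0.
\]
Since $H_2C=0$ by hypothesis and $H_1B=0$ by hypothesis, exactness forces $H_0(C,H_1N)=0$. In other words, the coinvariants of the $C$-action on $H_1N$ vanish.

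Next, I would leverage the no-finite-quotients assumption. Fix a prime $p$ and consider $V_p := H_1N\otimes_{\Z}\F_p$. Since $H_1N$ is a finitely generated abelian group, $V_p$ is a finite-dimensional $\F_p$-vector space, and in particular a finite set. The induced action of $C$ on $V_p$ gives a homomorphism $C\to\mathrm{GL}(V_p)$, whose image is finite. Because $C$ has no non-trivial finite quotients, this homomorphism is trivial, so $C$ acts trivially on $V_p$. This yields $(V_p)_C = V_p$. On the other hand, since tensor product commutes with coinvariants (both functors are right-exact in $H_1N$ and agree on free modules),
\[
(V_p)_C \;=\; (H_1N)_C\otimes_{\Z}\F_p \;=\; 0.
\]
Hence $H_1N\otimes\F_p=0$ for every prime $p$.

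Finally, a finitely generated abelian group $A$ with $A/pA=0$ for every prime $p$ must be trivial: the free part contributes a nonzero $\F_p$-rank and any non-trivial torsion summand $\Z/p^k\Z$ contributes nonzero $A/pA$ for that particular $p$. Therefore $H_1N=0$. The only genuinely substantive step is the second one; the interplay between finite generation of $H_1N$ (to get finiteness of $\mathrm{GL}(V_p)$) and the assumption on $C$ is what drives the argument, and everything else is a direct unpacking of the spectral sequence and elementary abelian-group theory.
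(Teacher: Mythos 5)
Your proof is correct. It differs from the paper's in a small but genuine way that is worth noting. The paper first uses the hypotheses on $H_1N$ and $C$ to kill the $C$-action on $H_1N$ itself: since $H_1N$ is finitely generated abelian, $\mathrm{Aut}(H_1N)$ is residually finite, so the homomorphism $C\to\mathrm{Aut}(H_1N)$ is trivial and hence $H_0(C,H_1N)=H_1N$; it then feeds this into the 5-term sequence to conclude $H_1N=0$. You reverse the order: you first read off $H_0(C,H_1N)=0$ from the 5-term sequence, and then, rather than trivialising the action on all of $H_1N$, you trivialise it only after reducing mod $p$, using the more elementary fact that $\mathrm{GL}(H_1N\otimes\F_p)$ is finite, together with the (correct) compatibility $(M\otimes\F_p)_C\cong M_C\otimes\F_p$. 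The advantage of your route is that it replaces the invocation of residual finiteness of the automorphism group of a finitely generated abelian group by a bare finiteness statement, at the cost of a short extra step with tensor products and a reduction over all primes. Both proofs hinge on exactly the same interplay between finite generation of $H_1N$ and the absence of finite quotients of $C$; yours is a little more hands-on and arguably more self-contained.
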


\begin{proof} As $H_1N$ is finitely generated, its automorphism group is residually finite. Thus, since $C$ has no finite quotients, the action of $C$ on $H_1N$
induced by conjugation in $B$ must be trivial and $H_0(C,H_1N)=H_1N$.
From the LHS spectral sequence we isolate the exact sequence
$H_2C \to H_0(C, H_1N) \to H_1B$. The first and last groups are zero by hypothesis,
so $ H_1N = H_0(C,H_1N) =0$.
\end{proof}

\subsection{An adapted version of the Rips construction}\label{s:rips}

Eliyahu Rips discovered a remarkably elementary
construction  \cite{rips} that has proved to be enormously
useful in the exploration of the subgroups of hyperbolic and related groups.
There are many refinements
of his construction in which extra properties are imposed on the group constructed.
The following version is well adapted to our needs.

\begin{prop}\label{rips1}
There exists an algorithm that, given an integer $m\ge 6$ and a 
finite presentation $\mathcal Q \equiv  \< X\mid R\>$ of a group $Q$,
will construct a finite presentation $\mathcal P \equiv \< X\cup \{a_1,a_2\}\mid \widetilde{R}\cup V\>$
for a group $\G$ so that
\begin{enumerate}
\item $N:=\<a_1,a_2\>$ is normal in $\G$, 
\item $\G/N$ is isomorphic to $Q$, %the group with presentation $\mathcal Q$,
\item $\mathcal P$ satisfies the small cancellation condition $C'(1/m)$, and 
\item $\G$ is perfect if $Q$ is perfect.
\end{enumerate}
\end{prop}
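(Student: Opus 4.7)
The plan is to follow the classical Rips construction, modified so that the auxiliary words attached to the original relators and to the new conjugation relators all have trivial abelianization in $F(a_1,a_2)^{\mathrm{ab}}\cong\Z^2$. Concretely, I would define $\mathcal P \equiv \<X\cup\{a_1,a_2\}\mid \widetilde R\cup V\>$ with $\widetilde R = \{r\cdot W_r : r\in R\}$ and $V = \{x^{\epsilon}a_ix^{-\epsilon}\cdot U_{x,i,\epsilon} : x\in X,\ i\in\{1,2\},\ \epsilon\in\{\pm 1\}\}$, where $W_r,U_{x,i,\epsilon}\in F(a_1,a_2)$ are words to be chosen. The conjugation relators immediately force $x^{\epsilon}a_ix^{-\epsilon}\in\<a_1,a_2\>$, giving (1) normality of $N=\<a_1,a_2\>$; imposing $a_1=a_2=1$ collapses every $W_r$ and $U_{x,i,\epsilon}$ and reduces $\mathcal P$ to $\mathcal Q$, yielding (2) $\Gamma/N\cong Q$.

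The first key step is to generate, effectively from $m$ and the cardinalities involved, a family $\mathcal W\subset F(a_1,a_2)$ of words that (a) are long compared with each $r\in R$, (b) pairwise share no common subword of relative length exceeding $1/m$, and (c) each abelianize to $0\in\Z^2$. A concrete recipe is to take products of long commutators $[a_1^{p_1},a_2^{q_1}][a_1^{p_2},a_2^{q_2}]\cdots[a_1^{p_k},a_2^{q_k}]$ with exponent sequences drawn from a widely-spaced arithmetic family of the type used in Rips's original paper \cite{rips}: distinct sequences share no long common patterns (forcing $C'(1/m)$), while the commutator brackets guarantee trivial abelianization. I would then assign distinct members of $\mathcal W$ to the slots $W_r$ and $U_{x,i,\epsilon}$; a standard piece-counting argument verifies (3), since in every relator the $\mathcal W$-portion dwarfs the short $r$- or $x^{\pm 1}a_ix^{\mp 1}$-prefix, so the presentation inherits $C'(1/m)$ from $\mathcal W$.

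For (4), suppose $Q$ is perfect. In $\Gamma^{\mathrm{ab}}\cong(\Z^X\oplus\Z^2)/L$, each $\widetilde r$ abelianizes to $(r^{\mathrm{ab}},0)$ because $W_r^{\mathrm{ab}}=0$, and the classes $\{r^{\mathrm{ab}}\}$ span $\Z^X$ since $Q^{\mathrm{ab}}=0$; each conjugation relator abelianizes to $(0,\,a_i+U_{x,i,\epsilon}^{\mathrm{ab}})=(0,a_i)$, which kills $a_1$ and $a_2$. Hence $\Gamma^{\mathrm{ab}}=0$, so $\Gamma$ is perfect.

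The principal technical obstacle is the construction of the word family $\mathcal W$, where one must simultaneously control length, small cancellation, and abelianization. I expect this to be a routine refinement of the combinatorial core of \cite{rips}, since the additional zero-abelianization constraint is accommodated by the commutator recipe and there is still an abundance of suitable words at every sufficient length; the construction is algorithmic because $C'(1/m)$ is decidable for an explicit finite family of words, so one can simply search through increasing lengths until the requisite supply of words in $\mathcal W$ has been generated.
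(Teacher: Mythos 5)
Your proposal is correct and rests on the same governing idea as the paper: run a Rips-type construction in which every auxiliary word in $F(a_1,a_2)$ has exponent sum $0$ in both $a_1$ and $a_2$, so that the conjugation relators force $a_1=a_2=0$ in $\Gamma^{\mathrm{ab}}$ while the modified relators $r\cdot W_r$ contribute only the classes $r^{\mathrm{ab}}$, giving $\Gamma^{\mathrm{ab}}\cong Q^{\mathrm{ab}}$ and hence $\Gamma$ perfect when $Q$ is. (As an aside, zero exponent sum is strictly only needed for the conjugation words $U_{x,i,\epsilon}$; your requirement on the $W_r$ is harmless but unnecessary for item (4).)

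Where you diverge from the paper is in how the zero-abelianization word family is produced, and this is the part that carries most of the technical weight. The paper runs the original Rips recipe at constant $C'(1/5m)$ and then applies the substitution $a_1\mapsto a_1a_2a_1^{-2}a_2^{-1}a_1$, $a_2\mapsto a_2a_1a_2^{-2}a_1^{-1}a_2$ to every letter of $\widetilde R\cup V$; both replacement words have zero exponent sum, the substitution has no boundary cancellation on reduced words (so lengths scale uniformly by $6$), and one checks that $C'(1/5m)$ degrades to at worst $C'(1/m)$. The benefit is that the small cancellation verification splits into ``standard Rips'' plus a single comparison-of-constants lemma. Your route instead builds the zero-abelianization family directly from products of long commutators $[a_1^{p_j},a_2^{q_j}]$ with exponents drawn from a sparse arithmetic scheme. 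That is a legitimate alternative: it avoids the two-stage construction. But it shifts the burden onto verifying $C'(1/m)$ for the commutator words themselves, which you flag as a ``routine refinement'' without doing it. This is not entirely innocuous: pieces include overlaps of a word with its own and other words' inverses and cyclic conjugates, and a commutator word contains long subwords $a_1^{\pm p_j}$, $a_2^{\pm q_j}$ that could collide if the same exponent (or exponent pair) recurs across slots. One must choose the exponent sequences so that no substantial block is repeated, taking inversion and cyclic rotation into account, and so that the total of at most $2|R|+8|X|$ words are mutually compatible. This can certainly be arranged, but it is the genuine content of your version of the lemma and should be proved, not asserted; by comparison, the paper's substitution trick packages the same difficulty into a single clean and checkable estimate.
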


\begin{proof} The original argument of Rips \cite{rips} proves all but the last item. In his argument,
one chooses a set of reduced words $\{u_r \mid r\in R\}\cup\{ v_{x,i,\e}\mid x\in X,\, i=1,2,\, 
\e=\pm 1\}$   in the free group on $\{a_1,a_2\}$, all of length at least $m\, \max \{|r| : r\in R\}$,
 that satisfies $C'(1/m)$. Then $\widetilde{R}=\{ru_r\mid r\in R\}$   and $V$ consists of the relations
 $xa_i x^{-1}v_{x,i,\e}$ with $x\in X^\e,\, i=1,2,$ and $\e=\pm 1$.
 Such a choice can be made algorithmic (in many different ways). 

To ensure that (4) holds, one chooses
 the words $v_{x,i,\e}$ to
have exponent sum $0$ in $a_1$ and $a_2$. Such a choice ensures
that the image of $N$ in $H_1\G$ is trivial, so if $\G/N\cong Q$
is perfect then so is $\G$. One way to arrange that the exponent sums are
zero is by a simple substitution: choose $\tilde R\cup V$ as above and then
replace each occurrence of $a_1$  by $a_1a_2a_1^{-2}a_2^{-1}a_1$
and each occurrence of $a_2$ by $a_2a_1a_2^{-2}a_1^{-1}a_2$. If the original 
construction is made so that the presentation is $C'(1/{5m})$ then this modified
presentation will be $C'(1/m)$.
\end{proof}

\begin{remark}\label{r:special} Dani Wise \cite{wiseSC} proved that metric small cancellation
groups can be cubulated and, building on work of Wise
\cite{wiseBIG}, Agol \cite{agol} proved that cubulated hyperbolic groups are virtually compact special in the sense of Haglund and Wise \cite{HW}. In particular the group $\G$ constructed in Proposition \ref{rips1}
is residually finite (cf.~\cite{wiseRF} and \cite{HW}).
It also follows from Agol's theorem, via Proposition 3.6 of \cite{pavel},
that virtually compact special hyperbolic groups are good in the sense of
Serre \cite{serre}, meaning that for every finite $\Z G$-module $M$ and $p\ge 0$, the map
$H^p(\widehat{G},M)\to H^p(\widehat{G},M)$ induced by the inclusion 
of $G$ into its profinite completion $G\hookrightarrow \widehat{G}$, is an
isomorphim. We shall need this remark in our proof of Theorem \ref{t:thmC}.
\end{remark}

\subsection{Profinite completions and Grothendieck Pairs}\label{s:pf}
Throughout, $\widehat{G}$ denotes
the profinite completion of a group $G$. By definition, $\widehat{G}$
is the inverse limit of the directed system of
finite quotients of $G$. 
The natural
map $G\to \hat G$ is injective if and only if $G$ is residually finite.
A  Grothendieck pair is a monomorphism $u: P\hookrightarrow G$ 
of residually finite groups such
that $\hat u :\hat P\to\hat G$ is an isomorphism but $P$ is not isomorphic to 
$G$. The existence of non-trivial Grothendieck pairs of finitely
presented groups was established by Bridson and Grunewald in \cite{BG} following an earlier breakthrough by Platonov and Tavgen in the finitely generated case \cite{PT}.

The following criterion  plays a central role in
\cite{PT},  \cite{BL} and \cite{BG}.

\begin{proposition}\label{l:PT} Let $1\to N\to H\to Q\to 1$ be an exact sequence of groups with fibre product $P$. 
Suppose $H$ is finitely generated, $Q$ is finitely presented, and $H_2(Q,\Z)=0$. If
$Q$ has no proper subgroups of finite index, then the inclusion
$P\hookrightarrow H\times H$ induces an isomorphism of profinite completions.
\end{proposition}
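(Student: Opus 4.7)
The plan is to establish surjectivity and injectivity of the induced map $\hat\iota\colon\hat P\to\widehat{H\times H}$ separately.

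\emph{Surjectivity.} Since $N\times N$ is normal in $H\times H$ with quotient $Q\times Q$, we have $\widehat{H\times H}/\overline{\langle N\times N\rangle}\cong\widehat{Q\times Q}$. Because $Q$ has no finite quotients, $\hat Q=1$ and thus $\widehat{Q\times Q}=1$, so $\widehat{H\times H}$ is topologically generated by the image of $N\times N$. Both $N\times 1$ and $1\times N$ lie in $P$, so the image of $\hat P$ in $\widehat{H\times H}$ is dense; being closed (a continuous image of the compact group $\hat P$), it equals $\widehat{H\times H}$.

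\emph{Injectivity.} Since $H$ is finitely generated and $Q$ is finitely presented, $P$ is also finitely generated (Lemma \ref{l:genP}), so we may apply a cohomological criterion of Grothendieck type (see \cite{groth} and the discussion in \cite{BG}): $\hat\iota$ is injective provided that for every finite $(H\times H)$-module $M$, the restriction $H^1(H\times H, M)\to H^1(P, M)$ is an isomorphism and $H^2(H\times H, M)\to H^2(P, M)$ is an injection. These are checked by comparing the Lyndon--Hochschild--Serre spectral sequences attached to
$$1\to N\times N\to P\to Q\to 1 \quad\text{and}\quad 1\to N\times N\to H\times H\to Q\times Q\to 1,$$
linked by the diagonal embedding $Q\hookrightarrow Q\times Q$ (which intertwines the two conjugation actions on $N\times N$).

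The crucial input is the vanishing $H^p(Q, M)=0$ for $p=1,2$ and every finite $Q$-module $M$. Finite presentation of $Q$ combined with ``no finite quotients'' forces $H_1(Q,\Z)=Q^{\mathrm{ab}}=0$ (any finitely generated abelian group with no finite quotients is trivial), and together with the hypothesis $H_2(Q,\Z)=0$ the universal coefficient theorem gives the vanishing for trivial coefficients. Any finite $Q$-module is automatically trivial (the image of $Q$ in $\operatorname{Aut}(M)$ is a finite quotient of $Q$, hence trivial), so the vanishing extends to all finite modules. By K\"unneth the analogous vanishing holds over $Q\times Q$. Feeding this into the two spectral sequences collapses the low-degree rows, and the diagonal inclusion matches the surviving $Q$- and $Q\times Q$-invariants of $H^*(N\times N, M)$, yielding the required isomorphism on $H^1$ and injection on $H^2$.

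The main obstacle is the spectral-sequence bookkeeping: one must verify the comparison at the corner of the first quadrant with enough care to see that the hypothesis $H_2(Q,\Z)=0$ is exactly what is needed --- it kills the $E_2^{2,0}$-term that would otherwise produce an extension class detected by $H^2(P,M)$ but not by $H^2(H\times H,M)$, and thus obstruct profinite injectivity.
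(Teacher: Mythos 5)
The paper cites this proposition from Platonov--Tavgen, Bass--Lubotzky and Bridson--Grunewald and offers no proof, so you are effectively reconstructing a proof from the literature blind.

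Your surjectivity argument is correct and is exactly the standard one: since $(H\times H)/(N\times N)\cong Q\times Q$ has trivial profinite completion, any finite quotient $q$ of $H\times H$ satisfies $q(N\times N)=q(H\times H)$, and $N\times N\le P$, so the image of $P$ in $\widehat{H\times H}$ is dense and closed, hence everything.

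Your injectivity argument has a genuine gap. You invoke a ``cohomological criterion of Grothendieck type'': that $\hat\iota$ is injective whenever $H^1(H\times H,M)\to H^1(P,M)$ is an isomorphism and $H^2(H\times H,M)\to H^2(P,M)$ is injective for every finite coefficient module $M$. This is not in the cited sources and is not a standard fact. There is a five-term-sequence argument of this shape for a \emph{surjection of profinite groups} --- for $1\to K\to\widehat{P}\to\widehat{H\times H}\to 1$ one gets $H^1(K,M)^{\widehat{H\times H}}=0$, and one would like to conclude $K=1$ --- but making that conclusion requires $K$ to be topologically finitely generated (so that the core in $\widehat P$ of a proper open subgroup of $K$ is still open), and nothing in the hypotheses supplies this. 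More fundamentally, your criterion is stated for cohomology of the \emph{discrete} groups $P$ and $H\times H$, whereas the conclusion concerns their \emph{profinite} completions; passing between the two is precisely the issue of goodness in the sense of Serre, which you neither establish nor may assume here. (It is exactly because this passage is delicate that the paper needs the specialness/goodness machinery in Remark~\ref{r:special} elsewhere.) Finally, the spectral-sequence comparison you gesture at is not actually carried out, and you never use finite presentability of $Q$ beyond ``$P$ is finitely generated,'' even though in the known proof finite presentability is load-bearing: together with $H_2(Q,\Z)=0$ it is what lets one promote the surjection $P\twoheadrightarrow F$ (equivalently $N\times N\twoheadrightarrow F$) to a finite quotient of $H\times H$. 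The accepted argument proves injectivity by that direct extension-of-quotients route, not by a cohomological comparison theorem.
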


It follows easily from the universal property of profinite completions
that if  $\widehat{G}\cong\widehat{H}$ then $G$ and $H$ have the same finite images. For finitely
generated groups, the converse is true  \cite[pp.~88--89]{RZ}.
Asking for $P\hookrightarrow G$ to be a Grothendieck pair is  more
demanding than asking simply that  there should be an abstract isomorphism
$\widehat{P}\cong\widehat{G}$. To see this we consider a pair of
groups constructed by Gilbert Baumslag \cite{gilb-exs}.

\begin{proposition} Let $G_1=(\Z/25)\rtimes_{\alpha}\Z$ and let $G_2=(\Z/25)\rtimes_{\alpha^2}\Z$, where $\alpha\in{\rm{Aut}}(\Z/25)$ is multiplication by $6$.
\begin{enumerate}
\item $G_1\not\cong G_2$.
\item $\widehat{G_2}\cong \widehat{G_1}$.
\item No homomorphism $G_1\to G_2$ or $G_2\to G_1$ induces an isomorphism
between $\widehat{G_1}$ and $\widehat{G_2}$. 
\end{enumerate}
\end{proposition}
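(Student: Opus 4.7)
The plan is to exploit the fact that multiplication by $6$ has order $5$ in $\mathrm{Aut}(\Z/25)$, so $\<\alpha\>=\<\alpha^2\>$, and to use that in each $G_i$ the torsion subgroup is exactly $\Z/25$ (since $\Z$ is torsion-free, any element $(a,n)$ with $n\ne 0$ has infinite order) with abelianization $\Z/5\oplus\Z$. I would first establish these structural facts as common setup.

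For (1), I would argue that any isomorphism $\phi:G_1\to G_2$ restricts to an automorphism of the characteristic subgroup $\Z/25$, giving $\phi(a_1)=c\,a_2$ for some unit $c\in(\Z/25)^{\times}$, and induces $\pm 1$ on the torsion-free quotient $\Z$, so $\phi(t_1)=(b,\pm 1)$. The defining relation $t_1a_1t_1^{-1}=a_1^6$ then forces $11^{\pm 1}c\equiv 6c\pmod{25}$; since $11\equiv 11$ and $11^{-1}\equiv 16$ modulo $25$, neither equals $6$, giving a contradiction.

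For (2), I would produce an explicit topological isomorphism. Since $G_i$ is residually finite and the $\Z$-action on $\Z/25$ factors through $\Z\to\Z/5$, standard arguments identify $\widehat{G_i}$ with $\Z/25\rtimes\widehat{\Z}$, where $\widehat{\Z}$ acts via its continuous quotient $\widehat{\Z}\to\Z/5$ composed with $\alpha$ (for $i=1$) or $\alpha^2$ (for $i=2$). A direct comparison of the two multiplication rules shows that the map $(a,z)\mapsto(a,uz)$ is an isomorphism $\widehat{G_1}\to\widehat{G_2}$ precisely when $2u\equiv 1\pmod 5$; such a unit $u\in\widehat{\Z}^{\times}$ is produced by the Chinese Remainder Theorem, for instance by taking $u$ with $5$-adic component equal to $3$ and all other components equal to $1$.

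The main obstacle is (3). The strategy is to take an arbitrary $\phi:G_1\to G_2$, write $\phi(a_1)=c\,a_2$ (torsion goes to torsion) and $\phi(t_1)=(b,m)$, and translate the $G_1$-relation into $(11^m-6)c\equiv 0\pmod{25}$. I would then split into two cases. If $c\in(\Z/25)^{\times}$, then $11^m\equiv 6\pmod{25}$, and since $11$ has order $5$ in $(\Z/25)^{\times}$ with $11^3\equiv 6$, this forces $m\equiv 3\pmod 5$; in particular $m\ne\pm 1$, so the image of $\hat\phi$ in the quotient $\widehat{G_2}/(\Z/25)\cong\widehat{\Z}$ is $m\widehat{\Z}\ne\widehat{\Z}$, and $\hat\phi$ is not surjective. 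If instead $5\mid c$, then $\phi(a_1)$ vanishes in $G_2^{\mathrm{ab}}\cong\Z/5\oplus\Z$, so the image of $\phi$ there is the cyclic subgroup generated by $\phi_{\mathrm{ab}}(t_1)$; its closure in $\widehat{G_2}^{\mathrm{ab}}\cong\Z/5\oplus\widehat{\Z}$ is procyclic, hence proper (since $\Z/5\oplus\widehat{\Z}$ surjects onto $(\Z/5)^2$ and so is not procyclic). The symmetric analysis with $\alpha$ and $\alpha^2$ interchanged handles maps $G_2\to G_1$.
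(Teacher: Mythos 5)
Your proof is correct. For part (1) your explicit computation ($11^{\pm 1}c\equiv 6c\pmod{25}$ and then cancelling the unit $c$) is a slightly more concrete rendering of the same argument that appears in the paper, which phrases it as: a generator of $G_2/A_2$ conjugates $A_2$ by $11^{\pm 1}$, never by $6$. For part (2) you take a genuinely different route: the paper simply records that one can compare finite quotients directly and otherwise defers to Baumslag's original argument, whereas you identify $\widehat{G_i}\cong\Z/25\rtimes\widehat{\Z}$ and build an explicit topological isomorphism by selecting a unit $u\in\widehat{\Z}^{\times}$ with $u\equiv 3\pmod 5$ via CRT; this is more self-contained and avoids invoking the general principle that finitely generated residually finite groups with the same finite images have isomorphic profinite completions. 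For part (3) your argument is again structured differently: the paper first uses residual finiteness of $G_1$ to conclude that any $\phi$ inducing an isomorphism of completions must be injective, then reuses the analysis from (1) to see that the image of $\phi$ has finite index $d>1$, so $\hat\phi(\widehat{G_1})$ has index $d$ and $\hat\phi$ is not surjective. You instead handle an arbitrary $\phi$ directly, splitting on whether $c=\phi(a_1)$ is a unit: in the unit case $11^m\equiv 6\pmod{25}$ forces $m\equiv 3\pmod 5$, so the image of $\hat\phi$ projects into $m\widehat{\Z}\subsetneq\widehat{\Z}$; in the non-unit case you pass to the abelianization $\Z/5\oplus\widehat{\Z}$ and note a procyclic subgroup is proper. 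Both routes are sound; yours trades the residual-finiteness reduction for a short extra case, and the unit case makes visible exactly which congruence ($m\equiv 3\pmod 5$) underlies the obstruction that the paper expresses as the index $d$ of the image.
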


\begin{proof} For $i=1,2$, let
$A_i$ be the unique $\Z/25 <G_i$. Each monomorphism
$\phi:G_1\to G_2$ restricts to an isomorphism
$\phi: A_1\to A_2$ and induces a monomorphism $G_1/A_1\to G_2/A_2$.
This last map cannot be an isomorphism: choosing a generator $t\in\Z<G_1$ so that $t^{-1}at=a^6$
for every $a\in A_1$  (writing the group operation in $A$ multiplicatively), we have $\phi(t)^{-1}\alpha \phi(t)=\alpha^6$
for all $\alpha\in A_2$, whereas $\tau^{-1}\alpha \tau=\alpha^{\pm 11}$
for  each $\tau\in G_2$ such that $\tau A_2$ generates $G_2/A_2$.  
This proves (1). 

With effort, one can prove that $G_1$ and $G_2$ have the same finite quotients
by direct argument after noting that any finite quotient $G_i\to Q$
that does not kill $A_i$ must factor through $G_i\to A_i\rtimes(\Z/5k)$ for some $k$.
Baumslag \cite{gilb-exs} gives a more elegant  and instructive proof of (2).

As $G_1$ and $G_2$ are residually finite, any map $\phi:G_1\to G_2$
that induces an isomorphism $\widehat{\phi}:\widehat{G}_1\to \widehat{G}_2$
must be a monomorphism.
The argument in the first paragraph shows in this case the image of
$\phi$ will be a proper subgroup of finite index  in $G_2$. If the index
is $d>1$, then the image of $\widehat{\phi}$ will have index $d$ in $ \widehat{G}_2$.
The same argument is valid with the roles of  $G_1$ and $G_2$ reversed, so (3)
is proved.
\end{proof}

\subsection{Biautomatic groups} 
The theory of automatic groups grew out of investigations into the algorithmic 
structure of Kleinian groups by Cannon and Thurston, and it was developed thoroughly
in the book by Epstein {\em{et al.}} \cite{epstein}; see also \cite{BGSS}. Let
$G$ be a group with finite generating set $A$ and let $A^*$ be the 
set of all finite words in the alphabet $A^{\pm 1}$. An
{\em{automatic structure}} for $G$ is determined by a normal
form $\mathcal{A}_G=\{\sigma_g \mid g\in G\}\subseteq A^*$ such that $\sigma_g=g$ in $G$. 
This normal form is required to satisfy two conditions: first, $\mathcal{A}_G\subset A^*$
must be a {\em{regular language}}, i.e. the accepted language of a finite state automaton;
and second, the edge-paths in the Cayley graph $\mathcal{C}(G,A)$ that begin at
$1\in G$ and are labelled by the words $\sigma_g$ must satisfy the following {\em{fellow-traveller condition}}: there is a constant $K\ge 0$
such that for all $g,h\in G$ and all integers $t\le \max\{|\sigma_g|, |\sigma_h|\}$,
$$
d_A(\sigma_g(t), \sigma_h(t))\le K \, d_A(g,h),
$$
where $d_A$ is the path metric on  $\mathcal{C}(G,A)$ in which each edge has length $1$,
and $\sigma_g(t)$ is the image in $G$ of the initial subword of length $t$ in $\sigma_g$.

A group is said to be {\em{automatic}} if it admits an automatic structure. If $G$
admits an automatic structure with the additional property that for  all integers $t\le \max\{|\sigma_g|, |\sigma_h|\}$,
$$
d_A(a.\sigma_g(t), \sigma_h(t))\le K \, d_A(ag,h),
$$
for all $g,h\in G$ and $a\in A$, then $G$ is said to be {\em{biautomatic}}. 
Biautomatic
groups were first studied by Gersten and Short \cite{GS}. Automatic and biautomatic groups form two of the most
important classes  studied in connection with notions of non-positive curvature in group theory; see \cite{mrb:camb} for a recent survey.

The established subgroup theory of biautomatic groups is considerably richer than that of
automatic groups. Biautomatic groups have a solvable 
conjugacy problem, whereas this is unknown for automatic groups. Groups in both
classes enjoy a rapid solution to the word problem, and have classifying 
spaces with finitely many cells in each dimension. The
isomorphism problem is open in both classes. No example has been found to distinguish between the two classes. 

\subsection{Some groups without finite quotients}\label{s:groups}
Graham Higman \cite{higman2} 
gave the first example of a finitely presented group that has no
non-trivial finite quotients. Many others have been discovered since, including 
the group %\begin{equation}\label{Bp}
$$ 
B_p = \< a, b, \alpha, \beta \mid ba^{-p}b^{-1}a^{p+1},\, \beta\alpha^{-p}\beta^{-1}\alpha^{p+1},\, 
[bab^{-1},a]\beta^{-1},\, [\beta\alpha\beta^{-1},\alpha]b^{-1}\>.
$$ %\end{equation}
This presentation is aspherical for $p\ge 2$; see \cite{BG}.
$B_2$ is a quotient of the 4-generator finitely presented group $H$ that Baumslag and Miller  concocted in \cite{gilb-chuck}. There is a surjection 
$H\to H\times H$, from which it follows that $H$ (and hence $B_2$)
cannot map onto a non-trivial finite group: for if $Q$ were
such a group, then the number
of distinct epimorphisms would satisfy
$|{\rm{Epi}}(H,Q)| < |{\rm{Epi}}(H\times H,Q)|$, which is nonsense if
$H$ maps onto $H\times H$.

\section{Proof of Theorem \ref{t:thm1} and Corollary \ref{c:cor}}\label{s:homology}

The proof of the following lemma is based on similar arguments in \cite{BDH} and \cite{BDM}.

\begin{lemma}\label{l:simult}
Let $\Pi$ be a property of groups that is inherited by direct limits
and suppose that every finitely presented group $G$ can be embedded in a 
finitely presented group $G_\Pi$ that has property $\Pi$.
Let $\Pi'$ be a second such property.   
Then there exists a  group $U^\dagger = K\rtimes\Z$  such that
\begin{enumerate}
\item $U^{\dagger}$ is finitely presented;
\item $U^{\dagger}$ contains an isomorphic copy of every finitely presented group;
\item $K$ has property $\Pi$ and property $\Pi'$.
\end{enumerate}
\end{lemma}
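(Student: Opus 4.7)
The plan is to build $K$ as an ascending union of finitely presented groups whose properties alternate between $\Pi$ and $\Pi'$, and then to compress the whole tower into a single finitely presented group of the form $K\rtimes\Z$ via an ascending HNN-style construction.

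First I enumerate the finitely presented groups $G_1,G_2,\ldots$ via a recursive listing of finite presentations. Using the two hypotheses alternately, I build an ascending chain of finitely presented groups $L_0\le L_1\le L_2\le\cdots$ as follows. Set $L_0=1$. Given $L_n$, form the finitely presented free product $L_n*G_{n+1}$ and apply the hypothesis to embed it into a finitely presented group $L_{n+1}$ that has property $\Pi$ when $n$ is even and property $\Pi'$ when $n$ is odd. Set $L_\infty:=\varinjlim L_n$. By construction each $G_n$ embeds in $L_n$, hence in $L_\infty$. The cofinal subchain $L_1\le L_3\le L_5\le\cdots$ consists of groups with property $\Pi$, so since $\Pi$ is inherited by direct limits and $L_\infty=\varinjlim_n L_{2n+1}$, the group $L_\infty$ has $\Pi$. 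The symmetric argument for the even-indexed subchain shows that $L_\infty$ also has $\Pi'$.

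It remains to realise $L_\infty$, or a direct-limit enlargement of it with the same features, as the kernel of a projection $U^\dagger\onto\Z$ with $U^\dagger$ finitely presented. I would arrange the above construction to be algorithmic (the hypothesis can evidently be made effective), so that $L_\infty$ is recursively presented. Then I would execute a Baumslag--Dyer--Heller-style telescope: design a single effective self-embedding $\phi:L\hookrightarrow L$ of some finitely presented group $L$ whose iterates reproduce, up to shift, the whole tower $(L_n)$, and form the ascending HNN extension $U^\dagger=\langle L,t\mid t^{-1}xt=\phi(x),\ x\in L\rangle$, which is finitely presented. The projection $U^\dagger\to\Z$ sending $t\mapsto 1$ and $L\mapsto 0$ is well-defined, and conjugation by $t$ gives an automorphism of its kernel, exhibiting $U^\dagger$ as $K\rtimes\Z$ with $K=\bigcup_n t^n L t^{-n}$ an ascending union of conjugate copies of $L$. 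A second application of the cofinality argument delivers $\Pi$ and $\Pi'$ for $K$, and the $G_n$ remain embedded via the corresponding copies of $L$ in the chain.

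The main obstacle is the construction of this self-embedding $\phi$: a priori the embeddings $L_n\hookrightarrow L_{n+1}$ supplied by the hypothesis are all different and involve distinct finitely presented groups, while an HNN extension with a single stable letter encodes iteration of a single self-map on one group. Overcoming this requires merging the entire recursively-enumerated system of embeddings into a uniform procedure on a single finitely presented $L$, using the classical Higman--Neumann--Neumann technique for compressing a countable amount of data into a finitely presented extension with one stable letter.
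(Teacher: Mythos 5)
There is a genuine gap: you correctly identify the shape of the construction (ascending HNN extension, cofinal subchains carrying $\Pi$ and $\Pi'$), but the central step --- producing a single finitely presented $L$ with a self-embedding $\phi:L\hookrightarrow L$ whose iterates encode your alternating tower --- is precisely the hard part, and you leave it unresolved. Invoking the ``classical Higman--Neumann--Neumann technique for compressing a countable amount of data'' does not do the job: HNN/Higman machinery lets you embed a recursively presented group into a finitely presented one, or a countable group into a $2$-generated one, but it does not hand you a self-embedding of a fixed finitely presented group whose ascending union reproduces a prescribed tower, nor does it obviously preserve the cofinality structure you would need to deduce that the compressed kernel still inherits $\Pi$ and $\Pi'$. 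Even if one could embed $L_\infty$ somewhere convenient, the conclusion ``$K$ has both properties'' would not follow without tracking that the specific conjugates $t^iLt^{-i}$ give cofinal subchains of groups having $\Pi$, respectively $\Pi'$; your sketch asserts this but the mechanism is not explained.

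The idea that closes this gap in the paper is disarmingly simple and worth internalising: there is no need for an infinite alternating tower at all. Start from a single Higman universal finitely presented group $U_0$, embed it in a finitely presented $V$ with $\Pi$, embed $V$ in a finitely presented $W$ with $\Pi'$, and then --- because $U_0$ is universal --- embed $W$ back into a copy $U_1\cong U_0$. Now fix an isomorphism $\phi:U_1\to U_0<U_1$ and form the ascending HNN extension $U^\dagger=\langle U_1,t\mid t^{-1}ut=\phi(u)\rangle$; the kernel $K$ of the retraction onto $\langle t\rangle$ is the ascending union $\bigcup_i t^iU_1t^{-i}$, and because $U_0<V<W<U_1$ one has interleaved cofinal subchains $\bigcup_i t^iVt^{-i}$ (all $\Pi$) and $\bigcup_i t^iWt^{-i}$ (all $\Pi'$), so $K$ has both properties, while $U_0<U^\dagger$ gives universality. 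In short: use the universal property of $U_0$ to close a length-four chain into a loop, rather than building an infinite tower and hoping to compress it afterwards. You gesture at the universal group but place it inside the tower rather than using it to \emph{terminate} the tower after one step; that is the missing observation.
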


\begin{proof} Let $U_0$ be a finitely presented 
group that contains an isomorphic copy of every finitely presented group. The
existence of such groups was established by  Higman \cite{higman1}. 
By hypothesis, there is a finitely presented group $V$ that contains $U_0$ and has property $\Pi$, and there is a finitely presented group $W$ that contains $V$ and has property $\Pi'$. Consider the following chain of embeddings, where 
the existence of the embedding into $U_1 \cong U_0$ comes from the universal
property of $U_0$,
\begin{equation}\label{e1}
U_0 < V < W < U_1.
\end{equation}
We fix an isomorphism $\phi: U_1 \to U_0$ and define
$U^{\dagger}$ to be the ascending HNN extension  $(U_1, t \mid t^{-1}ut = \phi(u) \,
\forall u\in U_1)$. Let $K$ be the normal closure of $U_1$ in $U^{\dagger}$
and note that this is the kernel of the natural retraction $U^{\dagger}\to \<t\>$.
Note too that $t^{-i}U_1t^i < U_0$ for all positive integers $i$. It follows that
for each positive integer $d$, we can express $K$ as an
ascending union 
$$
K = \bigcup_{i\ge d} t^i U_1t^{-i}  = \bigcup_{i\ge d-1} t^i U_0 t^{-i}.
$$
From (\ref{e1}) we deduce that $K$ is the direct limit of each of the
ascending unions  $\bigcup_i t^iVt^{-i}$ and $\bigcup_i t^iWt^{-i}$.
The first union has property $\Pi$, while the
second has property $\Pi'$.
\end{proof}

\medskip

\subsection{Proof of Theorem \ref{t:thm1}} Every finitely presented group can be embedded
in a finitely presented group that has no finite quotients; see \cite{mb:embed}
for explicit constructions. And it is proved in the \cite{BDH} that every finitely
presented group can be embedded in a finitely presented acyclic group. It
is clear that having no non-trivial finite quotients is preserved under passage to direct
limits, and acyclicity is preserved because homology commutes with direct limits. 
Thus Lemma \ref{l:simult} provides us with a finitely presented group
$U^{\dagger} = K\rtimes \Z$ such that $K$ is acyclic and has no non-trivial
finite quotients. 

Let $B$ be a finitely presented acyclic group that has no non-trivial finite quotients and let
$\tau\in B$ be an element of infinite order -- we can take $B$ to be
$B_p$ from Section \ref{s:groups} for example.
Let $U = U^{\dagger}\ast_C B$ be the amalgamated free product in which $\<\tau\>$
is identified with   $C:=1\times \Z < U^{\dagger}$.

As $K$ is acyclic, by Lemma \ref{l:acylic-fibre}, $ U^\dagger\to C$
induces an isomorphism $H_*(U^\dagger, \Z) \cong H_*(C,\Z)$. In particular,
$H_n(U^\dagger, \Z) = 0$ for $n\ge 2$, and in the Mayer-Vietoris 
sequence for $U = U^{\dagger}\ast_C B$ the only potentially non-zero terms are
$$
0\to H_2(U,\Z) \to H_1(C,\Z) \to H_1(U^\dagger,\Z) \oplus H_1(B,\Z) 
\to H_1(U,\Z) \to 0.
$$
$H_1(B,\Z)=0$ and $H_1(C,\Z) \to H_1(U^\dagger,\Z)$ is an isomorphism, so we
deduce that $U$ is acyclic.

Each subgroup of finite index $S<U$ will intersect both $U^\dagger$ and $B$ in 
a subgroup of finite index. Since neither has any proper subgroups of finite index, 
$S$ must contain both $U^\dagger$ and $H$. Hence $S=U$.
\qed

\subsection{Proof of Corollary \ref{c:cor}} 
Theorem E of \cite{BDM} (see also \cite{cfm}) states that if 
$\mathcal{A}=(A_n)$ is as described in Corollary \ref{c:cor}
then there is a finitely generated, recursively presented
group $G_{\mathcal{A}}$ with $H_n(G_{\mathcal{A}},\Z)\cong A_n$ for all $n\ge 1$.

By the Higman Embedding Theorem \cite{higman1}, $G_{\mathcal{A}}$ can be embedded in 
the universal finitely presented group $U$ constructed in the preceding proof.
We form the amalgamated free product of two copies of $U$ along $G_{\mathcal{A}}$,
 $$
 Q_{\mathcal{A}}:=U\ast_{G_{\mathcal{A}}} U.
 $$
Note that because $G_{\mathcal{A}}$ is finitely generated, $Q_{\mathcal{A}}$
is finitely presented. As in the preceding proof,
since the  factors of the amalgam have no proper
subgroups of finite index, neither does $ Q_{\mathcal{A}}$.

The Mayer-Vietoris sequence for this amalgam yields, for all $n\ge 2$, an exact sequence
(where the $\Z$ coefficients have been suppressed):
$$
 H_n U\oplus H_n U\to H_n Q_{\mathcal{A}}  \to H_{n-1} G_{\mathcal{A}}\to H_{n-1}U \oplus H_{n-1}U .
$$
Thus, since $U$ is acyclic,  $H_nQ_{\mathcal{A}} \cong H_{n-1} G_{\mathcal{A}}
\cong A_{n-1}$ for all $n\ge 2$.
\qed 

\smallskip

Theorem E in \cite{BDM} is complemented by a number of ``untangling results" which 
avoid the untangled condition that appears in that theorem  and in our Corollary \ref{c:cor}.
The following is a special case of what is established in the proof of 
\cite[Theorem G]{BDM}.

\begin{proposition} 
For every recursively presented abelian group $A$, there exists a finitely generated, recursively presented 
group $G$ such that $H_1(G,\Z)=0$ and $H_2(G, \Z)\cong A$.
\end{proposition}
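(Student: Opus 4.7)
The plan is to treat this as the degree-$2$ specialisation of Theorem G of \cite{BDM}, whose proof sidesteps the ``untangled'' hypothesis required in Corollary \ref{c:cor}. Fix a recursive presentation $A=\Z^{(X)}/\langle R\rangle$ with $X$ and $R$ recursive, and list the relators $\rho_1,\rho_2,\dots$ of $R$. The target is a finitely generated, recursively presented $G=F/N$ with $F_{ab}/N_{ab}=0$ (hence $H_1(G,\Z)=0$) and $(N\cap [F,F])/[F,N]\cong A$, so that Hopf's formula yields $H_2(G,\Z)\cong A$.

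First, I would build a countably generated recursively presented group $G_0$ realising $A$ in degree $2$. Take a free group $F_0$ on a recursive set and impose two families of relations: commutator relations forcing every free generator into the derived subgroup (making $H_1(G_0,\Z)=0$), and lifts of each $\rho_j$ to an explicit commutator in $[F_0,F_0]$ (feeding into $H_2$). When the $\rho_j$ do not form a $\Z$-basis for $R$, one additionally adjoins auxiliary generators that witness each $\Z$-linear dependence among the $\rho_j$, chosen so that modulo $[F_0,N_0]$ they contribute nothing new to $H_2$. A bookkeeping argument using Hopf's formula then yields $H_2(G_0,\Z)\cong A$; this is the ``untangling'' step of \cite{BDM}.

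Finally, pass from $G_0$ to a finitely generated $G$ with the same $H_1$ and $H_2$. The cleanest route is a homologically controlled Higman embedding, amalgamating $G_0$ into a finitely generated acyclic overgroup of the sort provided by Theorem \ref{t:thm1}; a Mayer--Vietoris computation like the one used in the proof of Theorem \ref{t:thm1} then shows $H_n(G,\Z)\cong H_n(G_0,\Z)$ for $n=1,2$. The main obstacle is this last step: verifying that the embedding neither revives $H_1$ nor adds spurious elements to $H_2$, and that the presentation of $G$ can be chosen to be recursive. This is precisely the content of the untangling arguments carried out in \cite[proof of Theorem G]{BDM}, to which I would defer for the technical details.
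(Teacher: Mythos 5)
The paper's own ``proof'' of this proposition consists of a single sentence citing it as a special case of \cite[Theorem~G]{BDM}, so your instinct to lean on that result is entirely consistent with the paper. However, your attempted reconstruction of the last step has a concrete error. If $G_0$ is the countably generated group you build with $H_1(G_0,\Z)=0$ and $H_2(G_0,\Z)\cong A$, and $U$ is the finitely presented acyclic group from Theorem~\ref{t:thm1}, then the amalgam $G:=U\ast_{G_0}U$ is indeed finitely generated, but the Mayer--Vietoris sequence
$$
\cdots\to H_n(U,\Z)\oplus H_n(U,\Z)\to H_n(G,\Z)\to H_{n-1}(G_0,\Z)\to H_{n-1}(U,\Z)\oplus H_{n-1}(U,\Z)\to\cdots
$$
collapses (since $U$ is acyclic) to $H_n(G,\Z)\cong H_{n-1}(G_0,\Z)$ for $n\ge 2$, not $H_n(G,\Z)\cong H_n(G_0,\Z)$ as you assert. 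So this construction yields $H_2(G,\Z)=0$ and $H_3(G,\Z)\cong A$. This degree shift is not a side effect to be argued away: it is precisely the mechanism the paper exploits in the proof of Corollary~\ref{c:cor} and in Corollary~\ref{c:H3}, where the amalgam is used on purpose to produce a finitely presented group $Q_A$ with $H_3(Q_A,\Z)\cong A$.

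The content of the proposition is that \cite[Theorem~G]{BDM} gets the finitely generated group with $H_2\cong A$ \emph{directly}, without any degree-shifting amalgamation --- the ``untangling'' there is internal to the construction and is not achieved by doubling along $G_0$. To repair your sketch you would need a homology-preserving (not degree-shifting) passage from $G_0$ to a finitely generated group, which is genuinely different from the Mayer--Vietoris argument you cite, and at that point you would simply be reproving \cite[Theorem~G]{BDM}. Given that the paper cites that theorem outright, the cleanest reading of your proposal is: the appeal to BDM is right, but your suggested completion of the last step would produce the conclusion of Corollary~\ref{c:H3}, not the proposition.
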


Exactly as in the proof of Corollary \ref{c:cor}, we deduce:

\begin{corollary}\label{c:H3}
For every recursively presented abelian group $A$, there exists a finitely presented 
group $Q_A$ with no proper subgroups of finite index
such that $H_1(Q_A,\Z)=H_2(Q_A,\Z)=0$ and $H_3(Q_A, \Z)\cong A$.
\end{corollary}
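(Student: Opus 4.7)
The plan is to mimic the proof of Corollary \ref{c:cor} almost verbatim, substituting the untangling Proposition just above Corollary \ref{c:H3} for Theorem E of \cite{BDM}. Start by taking the finitely generated, recursively presented group $G$ furnished by that Proposition, with $H_1(G,\Z)=0$ and $H_2(G,\Z)\cong A$. By the Higman Embedding Theorem, $G$ embeds in some finitely presented group, hence (by the universal property of $U$) into the finitely presented acyclic group $U$ of Theorem \ref{t:thm1}.

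Next, form
$$
Q_A := U *_G U,
$$
the amalgamated free product of two copies of $U$ along $G$. Because $G$ is finitely generated and $U$ is finitely presented, $Q_A$ is finitely presented. Exactly as in the proof of Corollary \ref{c:cor}, any finite-index subgroup $S<Q_A$ meets each factor in a finite-index subgroup, which must be the whole factor since $U$ has no proper subgroups of finite index; thus $S=Q_A$.

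To compute the homology I would feed the amalgam into the Mayer--Vietoris sequence (coefficients in $\Z$ suppressed):
$$
\cdots \to H_n G \to H_n U \oplus H_n U \to H_n Q_A \to H_{n-1} G \to H_{n-1} U \oplus H_{n-1} U \to \cdots
$$
Since $U$ is acyclic, $H_k(U)=0$ for $k\geq 1$, giving $H_n Q_A \cong H_{n-1} G$ for all $n\geq 2$. In particular $H_2 Q_A \cong H_1 G = 0$ and $H_3 Q_A \cong H_2 G \cong A$. For $H_1 Q_A$ one reads off the tail
$$
0 \to H_1 Q_A \to H_0 G \to H_0 U \oplus H_0 U \to H_0 Q_A \to 0,
$$
where the middle map is the diagonal $\Z \to \Z\oplus\Z$ and is therefore injective, forcing $H_1 Q_A=0$.

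I do not expect any real obstacle: the structural parts (finite presentability, absence of finite quotients, Mayer--Vietoris) are identical to the proof of Corollary \ref{c:cor}, and the extra degree of homology is inherited for free from the shift $H_n Q_A \cong H_{n-1} G$. The only mild care required is the low-degree check that $H_1 Q_A = 0$, and this is immediate from the injectivity of the diagonal in $H_0$.
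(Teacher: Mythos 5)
Your proposal is correct and follows essentially the same route as the paper, which explicitly signals that Corollary \ref{c:H3} is to be deduced ``exactly as in the proof of Corollary \ref{c:cor}'' from the Proposition immediately preceding it. Your extra low-degree check that $H_1(Q_A,\Z)=0$ via injectivity of $H_0 G\to H_0U\oplus H_0U$ is a small but genuine detail that the paper leaves implicit, since Corollary \ref{c:cor} only addresses $H_n$ for $n\ge 2$.
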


\section{Proof of Theorem \ref{t:perfect}}
The seed of undecidability that we need in Theorem \ref{t:perfect} comes from 
the following construction of Collins and Miller \cite{CM}.

\begin{theorem}\cite{CM}\label{t:cm} 
There is an integer $k$, a finite set $X$ and a recursive sequence $(R_n)$ of finite
sets of words in the letters $X^{\pm{1}}$ such that:
\begin{enumerate}
\item $|R_n| =k$ for all $n$, and $|X| < k$;
%\item each of the groups $Q_n =\<X \mid R_n\>$ is torsion-free;
\item all of the groups $Q_n \cong \<X \mid R_n\>$ are perfect;
\item there is no algorithm that can determine which of the
 groups $Q_n $ are trivial;
\item when $Q_n$ is non-trivial, the presentation $\mathcal{Q}_n\equiv 
\<X \mid R_n\>$ is aspherical.
\end{enumerate}
\end{theorem}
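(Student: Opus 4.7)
The plan is to extract undecidability from a classical source and then engineer a uniform sequence of presentations that preserves it while acquiring perfection and asphericity. I would start from a finitely presented group $G_0$ with unsolvable word problem (Boone--Novikov) together with a recursive sequence of words $(w_n)$ in its generators for which the predicate ``$w_n = 1$ in $G_0$'' is undecidable. Applying a Rabin/Adian-type transformation to each $w_n$, one associates a finite presentation $\mathcal{Q}_n^{(0)}$ over a fixed finite alphabet and with a fixed number of relators (padding shorter outputs with trivial relators) such that $\mathcal{Q}_n^{(0)}$ defines the trivial group if and only if $w_n = 1$ in $G_0$. This secures conditions (1) and (3).

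To arrange (2), I would choose the template of the Rabin/Adian construction so that every generator appears trivially in the abelianization of the uncollapsed presentation. A clean way is to add, once and for all, a fixed block of commutator relators expressing each generator as a product of commutators of the others, analogous to the exponent-sum trick used in Proposition \ref{rips1}. One must verify that these additional relators do not accidentally collapse a non-trivial $Q_n$ to the trivial group; this is a standard check once the abelianization of the uncollapsed construction is controlled.

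The main obstacle is condition (4): asphericity when $Q_n$ is non-trivial. The natural route, in the spirit of Proposition \ref{rips1}, is to replace each relator $r$ of $\mathcal{Q}_n^{(0)}$ by a long word $r\, u_r$ obtained by substituting standard small cancellation patterns into it, so that the resulting presentation $\mathcal{Q}_n$ satisfies a metric $C'(1/m)$ condition for some $m \ge 6$ and is therefore aspherical by Lyndon's identity theorem. The delicate task is to couple the padding words $u_r$ to the triviality dichotomy so that the padding disappears precisely when the Rabin-type relators force collapse, while leaving both the perfection condition and the small cancellation condition intact in the non-trivial case. Collins and Miller achieve this by encoding a Turing machine directly into the relator structure and matching its halting steps to the small cancellation estimates; this coupling, rather than any individual ingredient, is where the real work lies.
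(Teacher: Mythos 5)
The paper does not prove this statement at all: Theorem~\ref{t:cm} is cited verbatim from Collins and Miller \cite{CM}, and Bridson uses it as a black box.  So the task here was to reconstruct Collins and Miller's construction, and your sketch runs into a genuine obstruction at precisely the point you flag as ``the real work.''

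The fatal problem is that the small-cancellation padding you propose is incompatible with item~(3).  If every $\mathcal{Q}_n$ is obtained by replacing each relator $r$ with $r u_r$ so that the resulting presentation is $C'(1/m)$ for some $m\ge 6$, then Greendlinger's lemma (equivalently, Dehn's algorithm) forces every $\mathcal{Q}_n$ to present an \emph{infinite} group --- a non-empty $C'(1/6)$ presentation never presents the trivial group.  There is no way for the padding to ``disappear when the Rabin-type relators force collapse,'' because the padded relators are what is written on the page; if the padded presentation is $C'(1/m)$, the group is non-trivial, full stop.  So either the presentation is $C'(1/m)$ and triviality is decidable (always false), or the presentation sometimes fails $C'(1/m)$ and the asphericity claim in (4) is lost.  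You cannot use the same mechanism (metric small cancellation) both to certify asphericity of the non-trivial $Q_n$ and to allow some $Q_n$ to be trivial.  This is why Bridson is careful in his own construction (Section~4) to apply the Rips/small-cancellation machinery \emph{after} the Collins--Miller input, to the ambient group $\Gamma_n$, rather than to $Q_n$ itself: the $Q_n$ are allowed to be trivial, while $\Gamma_n$ never is.

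What Collins and Miller actually do is rather different in spirit: they build the $Q_n$ out of an aspherical Turing-machine group of cohomological dimension~$2$ (building on work of Miller and of Collins--Huebschmann), and the asphericity of the non-trivial $\mathcal{Q}_n$ is established by a direct combinatorial analysis of the 2-complex associated to the machine, not by a metric small cancellation condition.  Triviality corresponds to the machine accepting, and undecidability comes from the halting problem.  A smaller inaccuracy: asphericity of $C'(1/m)$ presentations, where it does hold, comes from the van Kampen diagram analysis of Lyndon--Schupp, not from Lyndon's identity theorem (which concerns the relation module of one-relator groups).  Finally, your proposed fix for perfection --- bolting on a fixed block of commutator relators --- would change the relator count and would need a fresh verification that it neither spoils small cancellation nor collapses non-trivial $Q_n$; the cleaner device, which Bridson uses in Proposition~\ref{rips1}, is to rewrite the \emph{existing} relators so that every generator has exponent sum zero, leaving $|R_n|$ unchanged.
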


We apply our modified version of the Rips algorithm (Proposition \ref{rips1})
to the presentations $\mathcal{Q}_n$ from Theorem \ref{t:cm} to obtain a recursive sequence of finite presentations $(\mathcal{P}_n)$ of perfect groups $(\Gamma_n)$. By
applying the algorithm from Proposition \ref{p:tilde} to these presentations
we obtain a recursive sequence of finite presentations $(\widetilde{\mathcal{P}}_n)$ for the 
universal central extensions $(\widetilde{\G}_n)$.
By Proposition \ref{p:uce}(3), $\widetilde{\G}_n$ is perfect. We define $G_n=
\widetilde{\G}_n\times \widetilde{\G}_n$, with the obvious presentation $\mathcal{E}_n$
derived from $\mathcal{P}_n$.  

In more detail, with the notation established in Proposition \ref{rips1} and
Proposition \ref{p:tilde}, if $\mathcal{Q}_n \equiv \< X\mid R_n\>$
then $\mathcal{P}_n = \< X, a_1, a_2 \mid R_n\cup V_n\>$ and
$\widetilde{\mathcal{P}}_n = \< X, a_1, a_2 \mid \overline{R_n\cup V_n}\>$
while 
$$
\mathcal{E}_n \equiv \< X_1, X_2, a_{11}, a_{12},a_{21}, a_{22}
\mid
C,\, S_{1,n},\, S_{2,n}\>,
$$
where $X_1$ and $X_2$ are two copies of $X$ corresponding to the two factors
of $\widetilde{\G}_n\times \widetilde{\G}_n$ and $C$ is a list of commutators
forcing each $x_1\in X_1\cup \{a_{11}, a_{12}\}$ to 
commute with each $x_2\in X_2\cup\{a_{21}, a_{22}\}$, and $S_{i,n}\ (i=1,2)$
is the set of words obtained from $\overline{R_n\cup V_n}$ by replacing the 
ordered alphabet $ (X, a_1, a_2 ) $ with $ (X_i, a_{i1}, a_{i2} ) $.
Note that the generating set of $\mathcal{E}_n$ does not vary with $n$, and
nor does the cardinality of the set of relators.  
The map $X\cup\{a_1,a_2\}\to Q_n$ that kills $a_1$ and $a_2$ and is the identity on $X$
extends to give the composition of the 
universal central extension of $\G_n$ and the map $\G_n\to Q_n$ in the
Rips construction: 
\begin{equation}
\widetilde\G_n\to\G_n\to Q_n.
\end{equation}
By construction, the kernel of this map is the preimage $\widetilde{N}_n < \widetilde\G_n$
of $N_n=\<a_1,a_2\>< \G_n$. In particular, 
since the kernel of $\widetilde\G_n\to\G_n$ is finitely generated (isomorphic
to $H_2(\G_n,\Z)$), we see that $\widetilde{N}_n$ is finitely generated.
Thus for each $n$ we have a short exact sequence
\begin{equation}
1\to \widetilde{N}_n\to \widetilde{\G}_n\to Q_n\to 1
\end{equation}
with $\widetilde{N}_n$ finitely generated, $\widetilde{\G}_n$ finitely presented (indeed it
has a finite classifying space), and $Q_n$ as in Theorem \ref{t:cm}. 
In particular, since $Q_n$ is of type $F_3$, the 1-2-3 Theorem tells us that the
fibre product $P_n < \widetilde{\G}_n\times \widetilde{\G}_n=G_n$ associated to this
short exact sequence is finitely presentable. And Lemma \ref{l:genP} tells 
that $P_n$ is generated by 
$$\{(x,x),\, (a_1,1),\, (a_2,1),\, (r,1) \mid x\in X,\, r\in R_n\}.$$

At this stage, we have constructed the desired recursive sequence of pairs of groups
$(P_n\hookrightarrow G_n)_n$ with an explicit presentation for
the perfect group $G_n$ and
an explicit finite generating set for $P_n$. The inclusion $P_n\hookrightarrow
G_n$ is defined by  $(x,x)\mapsto x_1x_2,\, (a_i,1)\mapsto a_{1i}$ {\em etc.}
Our next task is to prove that there is no algorithm that can determine for which $n$
the group $P_n$ is perfect. 

\smallskip
\noindent{\underline{\em Claim:}} The 
recursively enumerable set $\{n\mid P_n\text{ is perfect }\}\subset\N$
is not recursive. 
\smallskip

The claim will follow if we can argue that $P_n$ is perfect if and only if 
$Q_n$ is the trivial group. If $Q_n=1$
then $P_n=G_n$, and we constructed $G_n$ to be perfect. If $Q_n\neq 1$,
then by Theorem \ref{t:cm}(4), the presentation $\mathcal{Q}_n$ is aspherical --
i.e.~the presentation 2-complex $K$ for $\mathcal{Q}_n$ is a classifying
space $K(Q_n,1)$. In this case, $H_2(Q_n,\Z)=H_2(K,\Z)$ is free abelian. 
As $H_1(Q_n,\Z)=H_1(K,\Z)=0$, the rank of  $H_2(Q_n,\Z)$ is $v_2-v_1$, where
$v_2$ is the number of generators on $\mathcal{Q}_n$
(1-cells in $K$) and $v_2$ is the number or relators (2-cells). 
Theorem \ref{t:cm}(1) tells us that $H_2(Q_n,\Z)\neq 0$, so 
we will be done if we can prove that $H_1(P_n,\Z)\cong H_2(Q_n,\Z)$.

From the 5-term exact sequence for $1\to \widetilde N_n \to \widetilde{\G}_n
\to Q_n\to 1$ we have
$$
H_2(\widetilde{\G}_n,\Z)\to H_2(Q_n,\Z)\to H_0(Q_n,H_1\widetilde N_n ) \to H_1(\widetilde{\G}_n,\Z).
$$
The first and last terms are zero, by Proposition \ref{p:uce}(2), 
so $H_2(Q_n,\Z)\cong H_0(Q_n,\widetilde N_n)$. 
On the other hand,   
from the 5-term exact sequence for $P_n = \widetilde N_n\rtimes \widetilde{\G}_n$ we have $H_0(\widetilde{\G}_n, H_1\widetilde N_n)
\cong H_1(P_n,\Z)$. As in Lemma \ref{l:sameH_0},
we observe that $H_0(\widetilde{\G}_n, H_1\widetilde N_n)= H_0(Q_n,  H_1\widetilde N_n)$, so
$H_1(P_n,\Z)\cong H_2(Q_n,\Z)$. This completes the proof of the Claim. 

\smallskip
In order to complete the proof of Theorem \ref{t:perfect}, we must explain why
$G_n$ is biautomatic and residually finite. 
First, Neumann and Reeves \cite{NR} proved that all finitely generated central
extensions of hyperbolic groups are biautomatic; $\G_n$ is hyperbolic and therefore
$\widetilde{\G}_n$ is biautomatic. And the direct product of two biautomatic groups is 
biautomatic, so $G_n$ is biautomatic. The residual finiteness of $\widetilde{\G}_n$
(and hence $G_n$) is a deeper fact, depending on the work of Wise and Agol:
we saw in Remark \ref{r:special} that ${\G}_n$ is residually finite
and good in the sense of Serre; if $A$ is a finitely generated abelian group
and $G$ is a finitely generated residually finite group that is good, then for
any central extension $1\to A\to E\to G\to 1$, the
group $E$ is residually finite -- see 
\cite[Section I.2.6]{serre} and \cite[Corollary 6.2]{pavel};
thus ${\G}_n$ is residually finite.
\qed

\section{Proof of Theorem \ref{t:thmC}}

We restate Theorem \ref{t:thmC}, for the convenience of the reader.

\begin{theorem}\label{t:thmC'} For every recursively presented abelian group $A$ there exists a Grothendieck pair
$P_A\hookrightarrow G_A$ where $G_A$  is a torsion-free biautomatic group that is residually finite, has a finite classifying space and is superperfect, while
$P_A$ is finitely generated with $H_2(P_A,\Z)\cong A$.
\end{theorem}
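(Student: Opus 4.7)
The plan is to mimic the construction from the proof of Theorem \ref{t:perfect}, but to feed it the finitely presented groups produced by Corollary \ref{c:H3} rather than the Collins--Miller sequence. First I will apply Corollary \ref{c:H3} to obtain a finitely presented group $Q_A$ with no proper subgroups of finite index and with $H_1(Q_A,\Z) = H_2(Q_A,\Z) = 0$, $H_3(Q_A,\Z) \cong A$. Feeding $Q_A$ into the Rips construction of Proposition \ref{rips1} with parameter $m \ge 6$ will produce a perfect $C'(1/6)$ small-cancellation group $\Gamma_A$ (hence hyperbolic with aspherical 2-dimensional Cayley complex) fitting into a short exact sequence $1 \to N \to \Gamma_A \to Q_A \to 1$, with $N = \langle a_1, a_2 \rangle$ finitely generated. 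I will then form the universal central extension $\widetilde{\Gamma}_A \to \Gamma_A$, set $G_A := \widetilde{\Gamma}_A \times \widetilde{\Gamma}_A$, let $\widetilde{N}_A$ be the preimage of $N$ in $\widetilde{\Gamma}_A$, and take $P_A < G_A$ to be the fibre product of the composite $\widetilde{\Gamma}_A \to \Gamma_A \to Q_A$.

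The group $\widetilde{\Gamma}_A$ will be torsion-free, superperfect, and will have a compact classifying space by Proposition \ref{p:uce}; biautomatic by the Neumann--Reeves theorem applied to the hyperbolic $\Gamma_A$; and residually finite by the Wise--Agol--Serre argument of Remark \ref{r:special} combined with the central-extension fact invoked at the end of the proof of Theorem \ref{t:perfect}. All these properties pass to $G_A = \widetilde{\Gamma}_A \times \widetilde{\Gamma}_A$; superperfection survives by K\"unneth. The subgroup $\widetilde{N}_A$ is finitely generated because the kernel of $\widetilde{\Gamma}_A \to \Gamma_A$ is $H_2(\Gamma_A, \Z)$, which is finitely generated (as $\Gamma_A$ has a finite 2-dimensional classifying space). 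Lemma \ref{l:genP} then shows $P_A$ is finitely generated, residual finiteness of $P_A$ is inherited from $G_A$, and Proposition \ref{l:PT} applied to $1 \to \widetilde{N}_A \to \widetilde{\Gamma}_A \to Q_A \to 1$ shows that $P_A \hookrightarrow G_A$ induces an isomorphism of profinite completions.

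The main work will be the computation $H_2(P_A, \Z) \cong A$ via a three-step spectral sequence argument. First I will apply Lemma \ref{l3} to $1 \to \widetilde{N}_A \to \widetilde{\Gamma}_A \to Q_A \to 1$: its hypotheses --- $H_1\widetilde{N}_A$ finitely generated, $H_1\widetilde{\Gamma}_A = H_2 Q_A = 0$, and $Q_A$ having no finite quotients --- are all in place, so $H_1\widetilde{N}_A = 0$. Next, since $P_A \cong \widetilde{N}_A \rtimes \widetilde{\Gamma}_A$ via the standard splitting of the fibre product, Lemma \ref{l2} gives $H_2(P_A, \Z) \cong H_0(\widetilde{\Gamma}_A, H_2\widetilde{N}_A)$, and by Lemma \ref{l:sameH_0} this equals $H_0(Q_A, H_2\widetilde{N}_A)$. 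Finally I will apply Lemma \ref{l1} to $1 \to \widetilde{N}_A \to \widetilde{\Gamma}_A \to Q_A \to 1$: the vanishing hypotheses on $H_1\widetilde{N}_A$ and $H_2\widetilde{\Gamma}_A$ are in place, and --- the key observation --- the map $\eta_* : H_3\widetilde{\Gamma}_A \to H_3 Q_A$ factors through $H_3(\Gamma_A, \Z) = 0$, which vanishes because $\Gamma_A$ has a 2-dimensional classifying space. Thus $H_0(Q_A, H_2\widetilde{N}_A) \cong H_3(Q_A, \Z) \cong A$, as required.

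The hardest part will be threading the hypotheses of Lemmas \ref{l3}, \ref{l2}, and \ref{l1} together in the right order; the spectral sequence step itself is short once one notices that the two-dimensionality of the Rips quotient $\Gamma_A$ forces $\eta_*$ to vanish on $H_3$. It is at this point that it becomes essential that the Rips construction produces an aspherical 2-complex, rather than merely a finite classifying space of some higher dimension, and that the small-cancellation parameter was chosen with $m \ge 6$.
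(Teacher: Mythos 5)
Your proposal follows essentially the same route as the paper's own proof: feed the $Q_A$ of Corollary \ref{c:H3} into the modified Rips construction, pass to the universal central extension $\widetilde{\Gamma}_A$, take the fibre product $P_A < \widetilde{\Gamma}_A\times\widetilde{\Gamma}_A$, verify the Grothendieck-pair property via Proposition \ref{l:PT}, and compute $H_2(P_A,\Z)$ by chaining Lemmas \ref{l3}, \ref{l2}, \ref{l:sameH_0} and \ref{l1}, with the crucial vanishing of $\eta_*\colon H_3\widetilde{\Gamma}_A\to H_3 Q_A$ coming from the factorisation through the $2$-dimensional group $\Gamma_A$. This matches the paper's argument in both structure and detail.
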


\begin{proof}
Corollary \ref{c:H3} provides us with a finitely presented group $Q$ that has
no finite quotients, with  $H_1(Q_A,\Z)=H_2(Q_A,\Z)=0$ and $H_3(Q_A,\Z)\cong A$.
As in the proof of Theorem \ref{t:perfect},
we apply Proposition \ref{rips1}  to obtain
a short exact sequence
$$
1\to N \to \G_A \overset{p}\to Q_A\to 1
$$
where $\G_A$ is a metric small cancellation group and $N$ is finitely generated.
The argument in the final 
two paragraphs of the proof of Theorem \ref{t:perfect} shows that
the universal central extension $\tilde \G_A$ is  biautomatic (by \cite{NR})
and that it is residually finite (by virtue of the connection between specialness
and goodness in the sense of Serre). The asphericity of the small 
cancellation presentation for $\G_A$ implies, in the light of Proposition \ref{p:uce},
that $\tilde\G_A$ has a finite classifying space $K(\tilde\G_A,1)$.
 
Let $\eta:\tilde\G_A\to Q_A$ be the composition of the central extension
$\tilde{\G}_A\to \G_A$ and $p:\G_A \to Q_A$ and
let $P_A< G_A:=\tilde \G_A\times \tilde \G_A$ be the fibre product associated to the
short exact sequence 
\begin{equation}\label{e2}
1\to \tilde N\to \tilde\G_A\overset{\eta}\to Q_A\to 1.
\end{equation}
Lemma \ref{l:genP} assures us that $P_A$ is finitely generated.  
Thus  we will be done if we can show that $P_A\hookrightarrow G_A$ 
induces an isomorphism of profinite completions and that $H_2(P_A,\Z)\cong A$.
The first of these assertions is a special case of Lemma \ref{l:PT}, since
$\wh{Q}_A=1$ and $H_2(Q_A,\Z)=0$. 
The second assertion relies on a comparison of the LHS spectral sequences associated to (\ref{e2}) and $1\to \tilde N \to P_A\to \tilde\G_A\to 1$. The key
points are isolated in the lemmas in Section \ref{s:LHS}.
Using these lemmas, we conclude our argument as follows.

From Lemma \ref{l:sameH_0} we have
\begin{equation}\label{ee1}
H_0(\tilde\G_A, H_2\tilde N) = H_0(Q_A, H_2\tilde N),
\end{equation}
where the first group of coinvariants is for the action induced by conjugation in
$P_A$  and the second is induced by conjugation in $\tilde\G_A$. % (from (\ref{e2}).

From Lemma \ref{l2} we have 
\begin{equation}\label{ee2} H_2(P_A,\Z)\cong H_0(\tilde\G_A, H_2N).
\end{equation}   
 Lemma \ref{l3} applies  to the short exact sequence (\ref{e2}), yielding  $H_1\tilde N=0$.
And we {\em claim} that  Lemma \ref{l1} also applies to  (\ref{e2}), yielding 
$H_0(Q_A, H_2\tilde N)\cong H_3(Q_A,\Z)$. By combining this isomorphism  with (\ref{ee1}) and (\ref{ee2}),
we have  $H_2(P_A,\Z)\cong H_3(Q_A,\Z)$, as desired.

It remains to justify the claim  that Lemma \ref{l1} applies  to  (\ref{e2}).
Specifically, we must argue that  $\eta:\widetilde{\G}_A\to Q_A$
 induces the zero map on $H_3(-,\Z)$.
By construction, $\eta$ factors through $\widetilde{\G}_A\to \G_A$.
 The homology of $\G_A$ can be calculated
from the standard 2-complex of its aspherical
 presentation, so $H_k(\G_A,\Z)=0$ for all $k>2$,
and hence the composition
$$H_3(\widetilde{\G}_A,\Z)\to H_3(\G_A,\Z)\to H_3(Q_A,\Z)$$
is the zero map.
\end{proof}

\end{document}